\newtheorem{theorem}{Theorem}[section]
\newtheorem{lemma}[theorem]{Lemma}
\newtheorem{proposition}[theorem]{Proposition}
\newtheorem{corollary}[theorem]{Corollary}
\newtheoremstyle{definition}
  {6pt}
  {6pt}
  {}
  {}
  {\bfseries}
  {.}
  {.5em}
  {}%
\theoremstyle{definition}
\newtheorem{definition}[theorem]{Definition}
\newtheoremstyle{remark}
  {6pt}
  {6pt}
  {}
  {}
  {\bfseries}
  {.}
  {.5em}
  {}%
\theoremstyle{remark}
\newtheorem{remark}[theorem]{Remark}
\renewcommand\@makefntext[1]{%
\setlength\parindent{1em}%
\noindent
\makebox[1.8em][r]{}{#1}}
\begin{document}
\parskip 4pt
\large
\setlength{\baselineskip}{15 truept}
\setlength{\oddsidemargin} {0.5in}
\overfullrule=0mm
\def\bfh{\vhtimeb}
\date{}
\title{\bf \large  THE EULER-POINCARE CHARACTERISTIC\\
AND  MIXED MULTIPLICITIES}
\def\b{\vntime}
\author{
\begin{tabular}{ll}
 Duong Quoc Viet & Truong Thi Hong Thanh \\
\small duongquocviet@fmail.vnn.vn   & \small   thanhtth@hnue.edu.vn\\
\end{tabular}\\
\small Department of Mathematics, Hanoi National University of Education\\
\small 136 Xuan Thuy street, Hanoi, Vietnam\\
(Accepted by  Kyushu J. Math)}
 \date{}
\maketitle
\centerline{
\parbox[c]{11.20cm}{
\small{\bf ABSTRACT:} This paper defines  mixed  multiplicity
systems; the Euler-Poincare characteristic and the mixed
multiplicity symbol of $\mathbb{N}^d$-graded modules  with respect
to a mixed multiplicity system, and proves that the Euler-Poincare
characteristic and the mixed multiplicity symbol of any mixed
multiplicity system of the type $(k_1,\ldots,k_d)$ and the
$(k_1,\ldots,k_d)$-difference of the Hilbert polynomial are the
same. As an application, we get results for mixed
multiplicities.}}

\section{Introduction}
The mixed multiplicity is an important invariant  of Algebraic
Geometry and Commutative Algebra.  Risler-Teissier \cite{Te} in
1973 showed that each mixed multiplicity of
ideals of dimension $0$ in  a  Noetherian  local ring
is the multiplicity of an ideal generated by a superficial
sequence. Katz and Verma in 1989 \cite{KV} started the
investigation of mixed multiplicities of ideals of positive
height. For the case of arbitrary ideals, Viet \cite{Vi} in 2000
described  mixed multiplicities as the Hilbert-Samuel multiplicity
via (FC)-sequences. Trung and Verma in 2007 \cite{TV} interpreted
mixed volumes of polytopes as  mixed multiplicities of ideals.
Moreover,  by using filter-regular sequences, Manh and Viet
\cite{Vi6} in 2011 characterized  mixed multiplicities of
multi-graded modules in terms of the length  of modules.
 In past years, the
theory of mixed multiplicities has attracted much attention and
has been continually developed (see e.g. [4, 5, {11$-$15, 18$-$32]).

\enlargethispage{1cm} \footnotetext{\begin{itemize}\item[ ]This
research was in part supported by a grant from  NAFOSTED. \item[
]{\bf Mathematics Subject  Classification (2010):} Primary 13H15.
Secondary 13A02, 13E05, 13E10, 14C17. \item[ ]{\bf  Key words and
phrases:}  Mixed multiplicity, Euler-Poincare characteristic;
Koszul complex.
\end{itemize}}
Kirby and Rees \cite{KR1} in 1994 studied a kind of mixed
multiplicities of multi-graded modules  and  proved that these
mixed multiplicities  can be expressed as the Euler-Poincare
characteristic   of a certain sequence. However, how to  find
mixed multiplicity formulas, which are analogous to Serre's
formula (see e.g. \cite {Se} or \cite [Theorem 4.7.6]{BH1}) and
Auslander-Buchsbaum's formula (see e.g. \cite {AB} or \cite
[Theorem 4.7.4]{BH1}) in the Hilbert-Samuel multiplicity theory,
is not yet known. This problem became an open question of the
mixed multiplicity theory. This paper gives an  approach different
from the one in \cite{KR1}. Our approach is begun with defining
mixed multiplicity systems and related invariants, which is
similar to the one in \cite{AB, BH1}.
 We  need to choose exactly objects suitable for the goal of the
 paper and  give
mixed multiplicity formulas, which are analogous to formulas in
the Hilbert-Samuel multiplicity theory.

 Let $(A,\frak{m})$ be an Artinian local ring with  maximal ideal $\frak{m}$
 and infinite residue field $ A/\frak{m}.$
 Denote by $\mathbb{N}$ the set of all the non-negative integers. Let $d$ be a positive integer.
 Put ${\bf e}_i= (0, \ldots, \underset{(i)}{1} , \ldots, 0) \in \mathbb{N}^d$ for each $1 \leq i \leq d$ and
 $\mathrm{\bf k}!= k_1!\cdots k_d!;$\; $\mid\mathrm{\bf k}\mid = k_1+\cdots+k_d $
for any $\mathrm{\bf k}= (k_1,\ldots,k_d)\in \mathbb{N}^d.$
Moreover, set ${\bf 0} = (0,\ldots, 0) \in \mathbb{N}^d;$
${\bf 1} = (1,\ldots, 1) \in \mathbb{N}^d$ and
$\mathrm{\bf n}^\mathrm{\bf k}= n_1^{k_1}\cdots n_d^{k_d}$ for each
$\mathrm{\bf n},  \mathrm{\bf k}\in \mathbb{N}^d$ and
$\mathrm{\bf n} \ge {\bf 1}.$ Let $S=\bigoplus_{\mathrm{\bf n}\in
\mathbb{N}^d}S_{\mathrm{\bf n}}$  be a finitely generated standard $\mathbb{N}^d$-graded
algebra over $A$
(i.e., $S$ is generated over $A$
  by elements of total degree 1) and let $M=\bigoplus_{\bf n\ge \bf 0}M_{\bf n}$
  be  a finitely generated $\mathbb{N}^d$-graded $S$-module.
  For each subset ${\bf x}$ of  $S,$ we assign ${\bf x}M = 0$ if ${\bf x} = \emptyset,$ and ${\bf x}M = ({\bf x})M $ if ${\bf x} \ne \emptyset.$
  Set $S_{++}=\bigoplus_{\mathrm{\bf n}\geq \bf 1}S_{\mathrm{\bf n}}$ and $S_i= S_{{\bf e}_i}$
  for $ 1 \le i \le d.$
Denote by $\mathrm{Proj } S$ the set of the homogeneous prime
ideals of $S$ which do not contain $S_{++}$. Put
 $$\mathrm{Supp}_{++}M=\{P\in \mathrm{Proj } S\;|\;M_P\ne 0\}.$$
Assume that $S_{\bf 1}= S_{(1,\ldots,1)}$  is not contained in $\sqrt{\mathrm{Ann}_SM}$ and  $\dim \mathrm{Supp}_{++}M= s,$ then
     by \cite[Theorem 4.1]{HHRT},
    $\ell_A(M_{\mathrm{\bf n}})$ is a polynomial of degree $s$ for all large
     $\mathrm{\bf n}.$ Denote by
$P_M(\mathrm{\bf n})$ the Hilbert polynomial of the Hilbert
function $\ell_A(M_{\mathrm{\bf n}})$. The terms of total degree
$s$ in the polynomial $P_M(\mathrm{\bf n})$ have the form $
\sum_{\mid\mathrm{\bf k}\mid\;=\;s}e(M;\mathrm{\bf
k})\dfrac{\mathrm{\bf n}^\mathrm{\bf k}}{\mathrm{\bf k}!}.$
 Then $e(M;\mathrm{\bf k})$ are   non-negative integers not all zero, called the {\it  mixed multiplicity of $M$ of the type ${\bf k}$ } \cite{HHRT}.
Now for each $\mathrm{\bf k} \in \mathbb{N}^d$ such that  $\mid \mathrm{\bf k} \mid \; \ge \dim \mathrm{Supp}_{++}M,$   we put
$$E(M;\mathrm{\bf k}) =
\begin{cases} e(M;\mathrm{\bf k})\; \text{ if }\; \mid \mathrm{\bf k}\mid\; = \dim \mathrm{Supp}_{++}M, \\
0 \quad \quad \quad \;\text{ if } \; \mid \mathrm{\bf k}\mid\;  > \dim \mathrm{Supp}_{++}M.
\end{cases}$$

Next, we  define   mixed  multiplicity systems; the Euler-Poincare
characteristic and the mixed multiplicity symbol of finitely
generated $\mathbb{N}^d$-graded $S$-modules with respect to a
mixed  multiplicity system.
 \vskip 0.2cm

\noindent{\bf Definition 1.1} (Definition \ref {de3.2}). Let ${\bf
x} = x_1,\ldots, x_n$ be a sequence in $\bigcup _{j=1}^dS_j$
consisting of $m_1$ elements of $S_1,\ldots,m_d$ elements of
$S_d$. Then ${\bf x}$ is called a {\it mixed multiplicity system
of $M$ of the type $\mathrm{\bf m}= (m_1 ,\ldots,m_d)$ } if
$\dim\mathrm{Supp}_{++}\big({M}/{\bf x}M\big)\leq 0.$

In the case that ${\bf x}$ is a mixed  multiplicity system of the type  $\mathrm{\bf m}$ and
 $\mid \mathrm{\bf m}\mid = n.$
Denote by $H_{\bullet}({\bf x},M)$ the homology of the Koszul
complex of $M$ with respect to $ {\bf x} .$ Then  $ \sum_{i=0}^n
(-1)^i \ell_A[H_i({\bf x},M)_{\mathrm{\bf n}}]$ is a constant for
$\mathrm{\bf n}\gg \bf0$ (see Remark \ref{re3.1}(ii)). And denote
by $\chi({\bf x}, M)$ this constant. Then $\chi({\bf x}, M)$  is
briefly called
 the {\it Euler-Poincare characteristic of $M$ with respect to ${\bf x}.$}

Another invariant  related  to mixed  multiplicity systems is
defined as follows.

 \vskip 0.2cm
\noindent{\bf Definition 1.2} (Definition \ref{de3.4}).
 Let ${\bf x} = x_1,\ldots, x_n$ be a mixed  multiplicity system of $M.$ If $n=0,$ then
 $\ell_A(M_{\mathrm{\bf n}})= c
  \;(\mathrm{const})$ for all  $\mathrm{\bf n} \gg \bf 0$
  and we set
$\widetilde{e}({\bf x}, M) = c.$ If $n > 0,$ we set $\widetilde{e}({\bf x}, M) =
\widetilde{e}({\bf x}', M/x_1) - \widetilde{e}({\bf x}', 0_M:x_1),$ here ${\bf x}' =  x_2,\ldots, x_n.$ We call $\widetilde{e}({\bf x}, M)$ the {\it mixed multiplicity symbol of $M$ with respect to $ {\bf x} .$}

 As one might expect, first we obtain the following theorem.
\vskip 0.2cm \noindent{\bf Main Theorem } (Theorem
\ref{th3.31}).\;{\it Let $S$ be a finitely generated standard
$\mathbb{N}^d$-graded algebra over  an Artinian local ring $A$ and
let $M$ be  a finitely generated standard $\mathbb{N}^d$-graded
$S$-module. Then for any mixed multiplicity system ${\bf x}$ of
$M$ of the type $\mathrm{\bf k},$
 we have
$$\chi({\bf x}, M) = \widetilde{e}({\bf x}, M)= \triangle
^{\mathrm{\bf k}}P_M(\mathrm{\bf n}).$$ } \;\;\;\;From this
theorem, it follows that the mixed  multiplicity of the type
$\mathrm{\bf k};$
 the Euler-Poincare characteristic and the mixed multiplicity symbol of any
  mixed  multiplicity system of the type $\mathrm{\bf k}$
   are the same by the following result.
 \vskip 0.2cm \noindent{\bf Theorem 1.3} (Theorem \ref{th3.20}).\;{\it  Let  \;$\mathbf{k}\in
\mathbb{N}^d$ such that \;
 $\dim \mathrm{Supp}_{++}M \le\; \mid \mathrm{\bf k}\mid.$
Then  for any mixed  multiplicity system ${\bf x}$ of $M$ of the
type  $\mathrm{\bf k},$ we have $$E(M;\mathrm{\bf k})=\chi({\bf
x}, M) = \widetilde{e}({\bf x}, M).$$} \;\;\;\;Theorem 1.3 not
only yields interesting consequences in the case of graded modules
 (see e.g. Theorem \ref{th3.13}; Corollary \ref {co3.17};  Corollary \ref{co3.15};
  Corollary \ref{co3.12} and Corollary \ref {co3.18})
  but also gives  applications  to  mixed multiplicities of ideals
  (see Corollary \ref{co4.7}; Theorem \ref{th4.8};
  Corollary \ref{co4.11}; Corollary \ref{co4.16} and Corollary \ref{co4.10}).

 This paper is divided into four  sections.
 Section 2 is devoted to the discussion of filter-regular sequences and  mixed  multiplicity systems.
 In Section 3, we define  the Euler-Poincare characteristic;  the
mixed multiplicity symbol and obtain the Main Theorem (Theorem
\ref {th3.31}) and consequences.
 Section 4 gives applications of Section 3  to  mixed multiplicities of ideals.

\section{Filter-Regular Sequences and mixed  multiplicity systems}

This section defines mixed multiplicities; filter-regular sequences and mixed
multiplicity systems of finitely generated $\mathbb{N}^d$-graded
modules.

Assume that $S_{\bf 1} \nsubseteq \sqrt{\mathrm{Ann}_SM}$ and
$\dim \mathrm{Supp}_{++}M= s,$ then
     by \cite[Theorem 4.1]{HHRT},
    $\ell_A(M_{\mathrm{\bf n}})$ is a polynomial of degree $s$ for all large $\mathrm{\bf n}.$ Denote by
$P_M(\mathrm{\bf n})$ the Hilbert polynomial of the Hilbert
function $\ell_A(M_{\mathrm{\bf n}})$.
  The terms of total degree $s$ in the polynomial $P_M(\mathrm{\bf n})$ have the form
$ \sum_{\mid\mathrm{\bf k}\mid\;=\;s}e(M;\mathrm{\bf
k})\dfrac{\mathrm{\bf n}^\mathrm{\bf k}}{\mathrm{\bf k}!}.$
 Then $e(M;\mathrm{\bf k})$ are  non-negative integers not all zero, called the {\it  mixed multiplicity of $M$ of the type} ${\bf k}$  \cite{HHRT}.

Denote by $\triangle ^{\mathrm{\bf k}}f(\mathrm{\bf n})$ the
$\mathrm{\bf k}$-difference of the polynomial $f(\mathrm{\bf n})$
for each $\mathrm{\bf k} \in \mathbb{N}^d.$ Then we have some
following comments.
 \vskip 0.2cm

\begin{remark}\label{re2.0}\rm  If $\dim \mathrm{Supp}_{++}M= s \ge 0,$ then
$P_M(\mathrm{\bf n}) = \sum_{\mid\mathrm{\bf
k}\mid=s}e(M;\mathrm{\bf k}) \dfrac{\mathrm{\bf n}^\mathrm{\bf
k}}{\mathrm{\bf k}!} + Q_M(\mathrm{\bf n}) $ with $\deg
Q_M(\mathrm{\bf n}) < s.$  Hence  $\triangle ^{\mathrm{\bf
k}}P_M(\mathrm{\bf n}) = e(M;\mathrm{\bf k})$ for all $\mathrm{\bf
k}\in \mathbb{N}^d$ satisfying $\mid\mathrm{\bf k}\mid\;=\;s.$ Now
we assign $\dim \mathrm{Supp}_{++}M= -\infty$ to the case that
$\mathrm{Supp}_{++}M= \emptyset$ and the degree $-\infty$ to the
zero polynomial. Then
     by \cite[Theorem 4.1]{HHRT} and \cite[Proposition 2.7]{Vi6}, we always have
     $\deg P_M(\mathrm{\bf n})= \dim \mathrm{Supp}_{++}M.$
\end{remark}

The notion of mixed multiplicities $e(M;\mathrm{\bf k})$ of the
type ${\bf k}$ of a module $M$ always requires the condition $\mid
\mathrm{\bf k}\mid = \dim \mathrm{Supp}_{++}M .$ This sometimes
becomes a obstruction in describing the relationship between mixed
multiplicities and the  Euler-Poincare characteristic and in
expressing mixed multiplicity formulas. Consequently, we need the
following extension.
 \vskip 0.2cm
\begin{definition}\label{de3.7} For each $\mathrm{\bf k} \in \mathbb{N}^d$ such that  $\mid \mathrm{\bf k} \mid \; \ge \dim \mathrm{Supp}_{++}M,$   we put
$$E(M;\mathrm{\bf k}) =
\begin{cases} e(M;\mathrm{\bf k})\; \text{ if } \mid \mathrm{\bf k}\mid\; = \dim \mathrm{Supp}_{++}M, \\\\
0 \quad \quad \quad \;\text{ if } \mid \mathrm{\bf k}\mid\;  >
\dim \mathrm{Supp}_{++}M.
\end{cases}$$
\end{definition}

We turn now to filter-regular sequences of multi-graded modules.
The notion of filter-regular sequences was introduced by Stuckrad
and Vogel in \cite{SV}(see \cite{BS}). The theory of
filter-regular sequences became an important tool to study some
classes of singular rings and has been continually developed (see
e.g. \cite{BS, Hy,  Tr2, {Vi6}, VT1}).

 \vskip 0.2cm
\begin{definition}\label{de2.1} A homogeneous element $a$ of $S$ is called an {\it $S_{++}$-filter-regular element with respect
to  $M$}  if  $(0_M:a)_{\mathrm{\bf n}}=0$ for  all large
$\mathrm{\bf n}.$ A homogeneous sequence $x_1,\ldots, x_t$ in $S$
is called an {\it $S_{++}$-filter-regular sequence with respect to
$M$} if
 $x_i$ is an $S_{++}$-filter-regular element with respect to $M/(x_1,\ldots, x_{i-1})M$ for all $i = 1,\ldots, t.$
  \end{definition}

 \begin{remark}\label{re2.2} \rm We have the following comments for filter-regular
sequences of multi-graded modules.
\begin{itemize}
 \item[$\mathrm{(i)}$] By \cite [ Note (i)] {Vi6},  a homogeneous element
  $a\in S$ is  an  $S_{++}$-filter-regular element with respect to $M$ if and only if
  $0_M:a\subseteq 0_M:S_{++}^{\infty}.$  Moreover, for each
  ${\bf k} = (k_1,\ldots,k_d) \in \mathbb{N}^d,$
  there exists an $S_{++}$-filter-regular sequence ${\bf x}$
  in $\bigcup_{i=1}^dS_i$ with respect to  $M$ consisting of
  $k_1$ elements of $S_1,\ldots,k_d$ elements of
   $S_d$ by \cite [Proposition 2.2 and Note (ii)] {Vi6}.
   In this case, ${\bf x}$ is called an $S_{++}$-filter-regular sequence
   of {\it the type} ${\bf k}.$
 \item[$\mathrm{(ii)}$] Let  $a \in S_i.$
  Since the following  exact sequence
$$0\longrightarrow (0_M: a)_{\mathrm{\bf n}-{\bf e}_i} \longrightarrow M_{\mathrm{\bf n}-{\bf e}_i}
\xrightarrow{\;\;a\;\;} M_{\mathrm{\bf n}}\longrightarrow
\dfrac{M_{\mathrm{\bf n}}} {aM_{\mathrm{\bf n}-{\bf
e}_i}}\longrightarrow 0,$$ it  follows that $\triangle ^{{\bf
e}_i}P_M(\mathrm{\bf n})= P_{M/aM}(\mathrm{\bf n})- P_{(0_M:
a)}(\mathrm{\bf n}-{\bf e}_i).$ Note that $$\deg \triangle^{{\bf
e}_i}P_M(\mathrm{\bf n}) \leq \deg P_M(\mathrm{\bf n})-1 =
\dim\mathrm{Supp}_{++}M - 1.$$ Hence $\deg P_{M/aM}(\mathrm{\bf
n}) \leq \dim \mathrm{Supp}_{++}M - 1$ if and only if $$\deg
P_{(0_M: \;a)}(\mathrm{\bf n}) \leq \dim\mathrm{Supp}_{++}M - 1.$$
So $\dim\mathrm{Supp}_{++}\bigg(\dfrac{M}{aM}\bigg)\leq
\dim\mathrm{Supp}_{++}M - 1$ if and only if $$
\dim\mathrm{Supp}_{++}(0_M: a)\leq \dim\mathrm{Supp}_{++}M - 1.$$
 If $a$ is an
$S_{++}$-filter-regular element, then $P_{(0_M:a)}(\mathrm{\bf
n})=0.$ In this case, we have $\dim\mathrm{Supp}_{++}(0_M: a) =
-\infty$ and $\triangle ^{{\bf e}_i}P_M(\mathrm{\bf n}) =
P_{M/aM}(\mathrm{\bf n}).$ Therefore  $
\dim\mathrm{Supp}_{++}\bigg(\dfrac{M}{aM}\bigg)\leq
\dim\mathrm{Supp}_{++}M - 1.$  And if there exists an
$e(M;\mathrm{\bf k}) \ne 0$ with $k_i > 0,$  then $
\dim\mathrm{Supp}_{++}\bigg(\dfrac{M}{aM}\bigg) =
\dim\mathrm{Supp}_{++}M - 1$ by \cite [Proposition 3.3 (i)]{Vi6}.

 \item[$\mathrm{(iii)}$]
A homogeneous element $x\in S$ is  an $S_{++}$-filter-regular
element with respect to  $M$ if $x\notin P$ \;\;for any $P\in
\mathrm{Ass}_SM$ not containing   $S_{++}.$\;\;That \;\; means
$x\notin\bigcup_{S_{++}\nsubseteq P,\; P\in \mathrm{Ass}_SM}P$ by
\cite[Definition 2.1 and Proposition 2.5]{Vi6}.
\end{itemize}
\end{remark}

Now, we would like to give the following concept.

 \vskip 0.2cm
\begin{definition}\label{de3.2}
Let ${\bf x} = x_1,\ldots, x_n$ be a sequence of elements in
$\bigcup _{j=1}^dS_j$ consisting of $m_1$ elements of
$S_1,\ldots,m_d$ elements of $S_d$. Then ${\bf x}$ is called a
{\it mixed  multiplicity system of $M$ of the type  $\mathrm{\bf
m}= (m_1 ,\ldots,m_d)$ } if
  $\dim\mathrm{Supp}_{++}\bigg(\dfrac{M}{{\bf x}M}\bigg)\leq 0.$
\end{definition}

Remember that in the case of modules over local rings, one gave
the multiplicity systems that generate the ideals of definition.
The results of this paper will show the usefulness of mixed
multiplicity systems.

 \vskip 0.2cm
\begin{remark}\label{re 3.0}\rm
By Remark \ref{re2.2}(i), for each ${\bf k}\in \mathbb{N}^d$,
      there exists an  $S_{++}$-filter-regular sequence ${\bf x}= x_1,\ldots, x_s$ of
the type ${\bf k}.$ Then for any $ 0 \le i \le s,$ we have
$\dim\mathrm{Supp}_{++}\bigg(\dfrac{M}{(x_1,\ldots,
x_i)M}\bigg)\leq \dim\mathrm{Supp}_{++}M - i$ by Remark
\ref{re2.2}(ii).  Now we choose ${\bf k}$ such that $\mid {\bf
k}\mid \;\ge \dim\mathrm{Supp}_{++}M.$ Then by Remark
\ref{re2.2}(ii), we get
$\dim\mathrm{Supp}_{++}\bigg(\dfrac{M}{{\bf x}M}\bigg)\leq
\dim\mathrm{Supp}_{++}M - \mid {\bf k} \mid\; \le 0.$ Hence ${\bf
x}$ is a  mixed  multiplicity system of $M$. So for any ${\bf k}
\in \mathbb{N}^d$ with $\mid {\bf k} \mid \; \ge
\dim\mathrm{Supp}_{++}M,$ there exists a  mixed  multiplicity
system $x_1,\ldots, x_s$ of $M$ of the type  $\mathrm{\bf k}$ such
that for any  $ 0 \le i \le s,$
$\dim\mathrm{Supp}_{++}\bigg(\dfrac{M}{(x_1,\ldots,
x_i)M}\bigg)\leq \dim\mathrm{Supp}_{++}M - i.$   \end{remark}

We end this section with the following property of mixed multiplicity systems.

 \vskip 0.2cm
\begin{lemma}\label{lem 3.30} Let $0\longrightarrow M' \longrightarrow M\longrightarrow M"\longrightarrow 0$ be an exact sequence
of $\mathbb{N}^d$-graded $S$-modules. And let ${\bf x}$ be  a
sequence of elements in $\bigcup_{i=1}^d S_i.$   Then ${\bf x}$ is
a mixed  multiplicity system of $M$ if and only if ${\bf x}$ is a
mixed  multiplicity system of both $M'$ and $M".$
\end{lemma}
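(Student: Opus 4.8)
The plan is to establish the set equality
$$\mathrm{Supp}_{++}\Bigl(\frac{M}{{\bf x}M}\Bigr)=\mathrm{Supp}_{++}\Bigl(\frac{M'}{{\bf x}M'}\Bigr)\cup\mathrm{Supp}_{++}\Bigl(\frac{M''}{{\bf x}M''}\Bigr),$$
and then read off the lemma: for subsets of $\mathrm{Proj}\,S$ the dimension of a union is the maximum of the dimensions (with the convention $\dim\emptyset=-\infty$, cf. Remark \ref{re2.0}), so the left-hand side has dimension $\le 0$ exactly when both sets on the right do; that is precisely the statement that ${\bf x}$ is a mixed multiplicity system of $M$ if and only if it is one of both $M'$ and $M''$. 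The case ${\bf x}=\emptyset$ is just the additivity of $\mathrm{Supp}_{++}$ along the exact sequence, so I assume ${\bf x}\ne\emptyset$.

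The nontrivial point is the inclusion $\supseteq$ for the $\mathrm{Supp}_{++}$ of the quotients: tensoring the given short exact sequence with $S/({\bf x})$ is only right exact, so $M'/{\bf x}M'$ need not be a subquotient of $M/{\bf x}M$, and the containment $\mathrm{Supp}_{++}(M'/{\bf x}M')\subseteq\mathrm{Supp}_{++}(M/{\bf x}M)$ is not visible at the module level. I would get around this by working with radical annihilators. Since $S$ is noetherian and $M$ finitely generated, $M'$ and $M''$ are finitely generated, so for each of $N=M,M',M''$ one has $\mathrm{Supp}_{++}N=V(\sqrt{\mathrm{Ann}_SN})\cap\mathrm{Proj}\,S$, and by Remark \ref{re3.1}(i), $\sqrt{\mathrm{Ann}_S(N/{\bf x}N)}=\sqrt{\mathrm{Ann}_SN+({\bf x})}$. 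From the exactness of $0\to M'\to M\to M''\to0$ one has the elementary inclusions $(\mathrm{Ann}_SM')(\mathrm{Ann}_SM'')\subseteq\mathrm{Ann}_SM\subseteq\mathrm{Ann}_SM'\cap\mathrm{Ann}_SM''$; adding the ideal $({\bf x})$ to each term and multiplying out the left-most product gives
$$\bigl(\mathrm{Ann}_SM'+({\bf x})\bigr)\bigl(\mathrm{Ann}_SM''+({\bf x})\bigr)\subseteq\mathrm{Ann}_SM+({\bf x})\subseteq\bigl(\mathrm{Ann}_SM'+({\bf x})\bigr)\cap\bigl(\mathrm{Ann}_SM''+({\bf x})\bigr).$$
Since $\sqrt{\mathfrak{a}\mathfrak{b}}=\sqrt{\mathfrak{a}\cap\mathfrak{b}}=\sqrt{\mathfrak{a}}\cap\sqrt{\mathfrak{b}}$, all three radicals coincide, and hence, by the two facts recalled above,
$$\sqrt{\mathrm{Ann}_S(M/{\bf x}M)}=\sqrt{\mathrm{Ann}_S(M'/{\bf x}M')}\cap\sqrt{\mathrm{Ann}_S(M''/{\bf x}M'')}.$$

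To finish I would apply $V(-)\cap\mathrm{Proj}\,S$ to both sides and use $V(\mathfrak{a}\cap\mathfrak{b})=V(\mathfrak{a})\cup V(\mathfrak{b})$ together with $\mathrm{Supp}_{++}(-)=V(\sqrt{\mathrm{Ann}_S(-)})\cap\mathrm{Proj}\,S$; this yields exactly the set equality displayed at the outset, and the lemma follows as explained. The main obstacle is the one flagged above — showing that passing to the submodule $M'$ cannot enlarge the dimension of the relevant support even though $M'/{\bf x}M'$ is not a quotient of $M/{\bf x}M$ — and it is the identity $\sqrt{\mathfrak{a}\mathfrak{b}}=\sqrt{\mathfrak{a}}\cap\sqrt{\mathfrak{b}}$ that makes it work. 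Alternatively one could argue purely topologically, using $\mathrm{Supp}_S(N/{\bf x}N)=\mathrm{Supp}_SN\cap V({\bf x})$ and the additivity $\mathrm{Supp}_SM=\mathrm{Supp}_SM'\cup\mathrm{Supp}_SM''$, then intersecting with $\mathrm{Proj}\,S$.
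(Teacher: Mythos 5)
Your proof is correct, and it reaches the conclusion by a route that differs from the paper's in its key step. Both arguments rest on the identity $\sqrt{\mathrm{Ann}_S(N/{\bf x}N)}=\sqrt{\mathrm{Ann}_SN+({\bf x})}$ and on the containment $\mathrm{Ann}_SM\subseteq\mathrm{Ann}_SM'\cap\mathrm{Ann}_SM''$, which gives the easy inclusions $\mathrm{Supp}_{++}(M'/{\bf x}M')\cup\mathrm{Supp}_{++}(M''/{\bf x}M'')\subseteq\mathrm{Supp}_{++}(M/{\bf x}M)$. Where you diverge is in the reverse direction: the paper passes to the right-exact sequence $M'/{\bf x}M'\xrightarrow{g}M/{\bf x}M\to M''/{\bf x}M''\to 0$ and argues with $\dim\mathrm{Supp}_{++}(M/{\bf x}M)=\max\{\dim\mathrm{Supp}_{++}\mathrm{Im}\,g,\ \dim\mathrm{Supp}_{++}(M''/{\bf x}M'')\}$ together with $\dim\mathrm{Supp}_{++}\mathrm{Im}\,g\le\dim\mathrm{Supp}_{++}(M'/{\bf x}M')$, whereas you never leave the ideal lattice: the sandwich $(\mathrm{Ann}_SM')(\mathrm{Ann}_SM'')\subseteq\mathrm{Ann}_SM$, after adding $({\bf x})$ and taking radicals via $\sqrt{\mathfrak{a}\mathfrak{b}}=\sqrt{\mathfrak{a}}\cap\sqrt{\mathfrak{b}}$, yields the full set equality $\mathrm{Supp}_{++}(M/{\bf x}M)=\mathrm{Supp}_{++}(M'/{\bf x}M')\cup\mathrm{Supp}_{++}(M''/{\bf x}M'')$ in one stroke. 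Your version thus proves a slightly stronger, set-level statement and avoids any discussion of $\mathrm{Im}\,g$, at the cost of the (entirely standard) computation with products of annihilators; the paper's version stays closer to the exact-sequence formalism it reuses later (e.g.\ in Lemma 3.4) but only records the dimension inequality. Your separate treatment of ${\bf x}=\emptyset$ is appropriate given the paper's convention ${\bf x}M=0$ in that case, and the identification $\mathrm{Supp}_{++}N=V(\mathrm{Ann}_SN)\cap\mathrm{Proj}\,S$ for finitely generated $N$ that you invoke is exactly what makes the translation from radicals to supports legitimate.
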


\begin{proof} Note that $\sqrt{\mathrm{Ann}(F/{\frak c}F)}= \sqrt{\mathrm{Ann}F + {\frak c}}$ for each ideal $\frak c$ and each module $F.$
 Hence since $\mathrm{Ann}_SM \subseteq \mathrm{Ann}_SM',$  it follows that  $$\dim\mathrm{Supp}_{++}\bigg(\dfrac{M'}{{\bf x}M'}\bigg)\leq \dim\mathrm{Supp}_{++}\bigg(\dfrac{M}{{\bf x}M}\bigg).$$
On the other hand from the exactness of
$$ M'/{\bf x}M' \xrightarrow{\;g\;} M/{\bf x}M\rightarrow M"/{\bf x}M"\rightarrow 0,$$
it implies that  $\dim\mathrm{Supp}_{++}\bigg(\dfrac{M'}{{\bf
x}M'}\bigg)\geq \dim\mathrm{Supp}_{++} \mathrm{Im}\; g $ and
$$\dim\mathrm{Supp}_{++}\bigg(\dfrac{M}{{\bf x}M}\bigg) = \max\bigg\{\dim\mathrm{Supp}_{++} \mathrm{Im}\; g, \; \dim\mathrm{Supp}_{++}\bigg(\dfrac{M"}{{\bf x}M"}\bigg)\bigg\}.$$
From the above facts, we immediately get that
$\dim\mathrm{Supp}_{++}\bigg(\dfrac{M}{{\bf x}M}\bigg) \le 0$ if
and only if $\dim\mathrm{Supp}_{++}\bigg(\dfrac{M'}{{\bf
x}M'}\bigg)\leq 0$ and
$\dim\mathrm{Supp}_{++}\bigg(\dfrac{M"}{{\bf x}M"}\bigg)\leq 0.$
Thus, ${\bf x}$ is a mixed  multiplicity system of $M$ if and only
if ${\bf x}$ is a mixed  multiplicity system of both $M'$ and
$M".$
\end{proof}

\section{The Euler-Poincare Characteristic and  Multiplicities}

In this section, we  define   the Euler-Poincare characteristic
and the mixed multiplicity symbol of finitely generated standard
$\mathbb{N}^d$-graded $S$-modules with respect to a  mixed
multiplicity system. Next we prove that the Euler-Poincare
characteristic and the mixed multiplicity symbol of any  mixed
multiplicity system of the type ${\bf k} = (k_1,\ldots,k_d) \in
\mathbb{N}^d$ and the ${\bf k}$-difference of the Hilbert
polynomial are the same, and give interesting consequences for
mixed multiplicities of $\mathbb{N}^d$-graded modules.

Let ${\bf x}$ be a system  of $n$ homogeneous elements in $S.$ Then by \cite[Remark 1.6.15]{BH1}, we may consider the following Koszul complex $K_{\bullet}({\bf x},M)$ of $M$ with respect to ${\bf x}$   as an  $\mathbb{N}^d$-graded complex with a differential of degree ${\bf 0}:$
$$ 0\longrightarrow K_n({\bf x},M)\longrightarrow K_{n-1}({\bf x},M)\longrightarrow \cdots\longrightarrow K_1({\bf x},M)\longrightarrow K_0({\bf x},M) \longrightarrow 0.$$
Denote by $H_{\bullet}({\bf x},M)$ the homology of the Koszul complex of $M$ with respect to
$ {\bf x} .$   We get the following sequence of the homology modules
$$H_{\bullet}({\bf x},M):\ldots H_0({\bf x},M), H_1({\bf x},M),\ldots,H_n({\bf x},M)\ldots.$$

The  Koszul complex theory became an important tool to study
several different theories of Commutative Algebra and Algebraic Geometry.
 \vskip 0.2cm

\begin{remark}\label{re3.1}\rm
Note that $K_i({\bf x},M)$ and $H_i({\bf x},M)$ are finitely generated $\mathbb{N}^d$-graded $S$-modules. And  ${\bf x}H_i({\bf x},M) = 0$ for all $0 \leq i \leq n$ (see \cite[Proposition 1.6.5(b)]{BH1}).  Moreover,  we have the following notes.
\begin{itemize}
\item[$\mathrm{(i)}$]
 Since $\sqrt{\mathrm{Ann}_S(M/{\bf x}M)}= \sqrt{\mathrm{Ann}_SM + ({\bf x})}$ and
  $\mathrm{Ann}_SM + ({\bf x}) \subseteq \mathrm{Ann}_SH_i({\bf x},M)$ for $0 \leq i \leq n,$ it follows that
    $\mathrm{Ann}_S\bigg(\dfrac{M}{{\bf x}M}\bigg)
 \subseteq \sqrt{ \mathrm{Ann}_SH_i({\bf x},M)}$ for $0 \leq i \leq n.$ Consequently,
 if $\dim\mathrm{Supp}_{++}\bigg(\dfrac{M}{{\bf x}M}\bigg)\leq 0,$ then
$\dim\mathrm{Supp}_{++}H_i({\bf x},M)\leq 0$ for all $0 \leq i
\leq n.$ \item[$\mathrm{(ii)}$] Let ${\bf x}= x_1,\ldots, x_n$ be
a mixed  multiplicity system of $M$ of the type ${\bf m}.$
 Then  for each $0 \leq i \leq n$ we have
$\dim\mathrm{Supp}_{++}H_i({\bf x},M)\leq 0$ by (i). Therefore  by
Remark \ref{re2.0}, we get
 $\ell_A[H_i({\bf x},M)_{\mathrm{\bf n}}] = a_i (\mathrm{const})$
 for all  $\mathrm{\bf n}\gg \bf 0$ and for each $0 \leq i \leq n.$
 Hence there exists $\mathrm{\bf v}\in \mathbb{N}^d$ such that for each $0 \leq i \leq n,$ $\ell_A[H_i({\bf x},M)_{\mathrm{\bf n}}] = a_i$
for all $\mathrm{\bf n} \geq \mathrm{\bf v}.$ So  we get the
constant: $\chi({\bf x}, M) = \sum_{i=0}^n (-1)^i \ell_A[H_i({\bf
x},M)_{\mathrm{\bf n}}]$ for all $\mathrm{\bf n}\geq \mathrm{\bf
v}.$ In this case, $\chi({\bf x}, M)$ is briefly called the {\it
Euler-Poincare characteristic of $M$ with respect to $ {\bf x}.$}
\end{itemize}
\end{remark}

Some  basic properties of the Euler-Poincare characteristic with respect to mixed  multiplicity systems are stated by the following lemma.
 \vskip 0.2cm
\begin{lemma}\label{lem 3.4} Let ${\bf x} = x_1,\ldots, x_n$ be  a mixed  multiplicity system of $M.$  Then the following statements hold.
\begin{itemize}
\item[$\mathrm{(i)}$] $\chi({\bf x},\underline{\;}\; )$ is additive
on short exact sequences, i.e., if
$$0\longrightarrow M' \longrightarrow M\longrightarrow M"\longrightarrow 0$$
is a short exact sequence, then $\chi({\bf x},M)= \chi({\bf
x},M')+\chi({\bf x},M").$ \item[$\mathrm{(ii)}$] If $x_1 \in
\sqrt{\mathrm{Ann}_SM},$ then $\chi({\bf x},M)= 0.$
\item[$\mathrm{(iii)}$] If $x_1$ is $M$-regular and ${\bf x}' =
x_2,\ldots, x_n,$  then $\chi({\bf x},M)= \chi({\bf x}', M/x_1M).$
\end{itemize}
\end{lemma}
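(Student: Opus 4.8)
\medskip
\noindent\textbf{Proof plan.} The plan is to prove the three parts in order, each reducing to a manipulation of the Koszul complex $K_{\bullet}({\bf x},S)$ --- a bounded complex of free (hence flat) $\mathbb{N}^d$-graded $S$-modules with differential of degree ${\bf 0}$ --- together with the trivial observation that the alternating sum of lengths along a bounded exact sequence of finite length $A$-modules vanishes; at each step Lemma \ref{lem 3.30} identifies which modules carry ${\bf x}$, or its truncation ${\bf x}'=x_2,\ldots,x_n$, as a mixed multiplicity system, so that the Euler--Poincare characteristics in question are defined by Remark \ref{re3.1}(ii). For (i): by Lemma \ref{lem 3.30}, ${\bf x}$ is a mixed multiplicity system of $M'$ and of $M''$ as well, so all three characteristics exist. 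Tensoring $0\to M'\to M\to M''\to 0$ with $K_{\bullet}({\bf x},S)$ yields a short exact sequence of $\mathbb{N}^d$-graded complexes with degree-${\bf 0}$ maps, hence a degree-${\bf 0}$ long exact homology sequence $\cdots\to H_j({\bf x},M')\to H_j({\bf x},M)\to H_j({\bf x},M'')\to H_{j-1}({\bf x},M')\to\cdots$. Taking its degree-${\bf n}$ part for ${\bf n}\gg 0$ --- chosen, by Remark \ref{re3.1}(ii) applied to the finitely many modules involved, large enough that every $\ell_A[H_j({\bf x},-)_{\bf n}]$ has reached its eventual constant value --- produces a bounded exact sequence of finite length $A$-modules, whose vanishing alternating length sum is exactly $\chi({\bf x},M')-\chi({\bf x},M)+\chi({\bf x},M'')=0$.

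For (ii), I would first check that ${\bf x}'$ is itself a mixed multiplicity system of $M$: since $x_1\in\sqrt{\mathrm{Ann}_SM}\subseteq\sqrt{\mathrm{Ann}_SM+({\bf x}')}$, adjoining $x_1$ does not change the radical, so $\sqrt{\mathrm{Ann}_S(M/{\bf x}M)}=\sqrt{\mathrm{Ann}_S(M/{\bf x}'M)}$, whence $\dim\mathrm{Supp}_{++}(M/{\bf x}'M)=\dim\mathrm{Supp}_{++}(M/{\bf x}M)\le 0$. With $x_1\in S_i$, the factorization $K_{\bullet}({\bf x},S)=K_{\bullet}(x_1,S)\otimes_SK_{\bullet}({\bf x}',S)$ realizes $K_{\bullet}({\bf x},M)$ as the mapping cone of $x_1\colon K_{\bullet}({\bf x}',M)(-{\bf e}_i)\to K_{\bullet}({\bf x}',M)$, giving the long exact sequence
$$\cdots\to H_j({\bf x}',M)(-{\bf e}_i)\xrightarrow{x_1}H_j({\bf x}',M)\to H_j({\bf x},M)\to H_{j-1}({\bf x}',M)(-{\bf e}_i)\xrightarrow{x_1}\cdots.$$
In degree ${\bf n}\gg 0$ all terms have their stable finite lengths, and the map $x_1\colon H_j({\bf x}',M)_{{\bf n}-{\bf e}_i}\to H_j({\bf x}',M)_{\bf n}$ connects $A$-modules of equal length, so its kernel and cokernel have a common length $c_j$ (with $c_{-1}=0$); splitting the long exact sequence into short exact pieces gives $\ell_A[H_j({\bf x},M)_{\bf n}]=c_j+c_{j-1}$, so $\chi({\bf x},M)=\sum_j(-1)^j(c_j+c_{j-1})$ telescopes to $0$. (Alternatively, one may deduce (ii) from (i) by filtering $M$ through the submodules $x_1^tM$ and reducing to the case $x_1M=0$, where $K_{\bullet}(x_1,M)$ has zero differential and the Koszul complex splits off a shifted copy of $K_{\bullet}({\bf x}',M)$.)

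For (iii), the key input is the standard fact that, $x_1$ being $M$-regular, the two-term complex $K_{\bullet}(x_1,M)=(\,M(-{\bf e}_i)\xrightarrow{x_1}M\,)$ is quasi-isomorphic to $M/x_1M$. Tensoring this quasi-isomorphism over $S$ with the bounded complex of free modules $K_{\bullet}({\bf x}',S)$ preserves it --- its mapping cone is a bounded acyclic complex tensored with a bounded flat complex, hence acyclic --- and since $K_{\bullet}({\bf x},M)=K_{\bullet}(x_1,M)\otimes_SK_{\bullet}({\bf x}',S)$ while $K_{\bullet}({\bf x}',M/x_1M)=(M/x_1M)\otimes_SK_{\bullet}({\bf x}',S)$, this yields $H_j({\bf x},M)\cong H_j({\bf x}',M/x_1M)$ for every $j$ (cf. \cite[\S1.6]{BH1}). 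Because $(M/x_1M)/{\bf x}'(M/x_1M)=M/{\bf x}M$ has $\dim\mathrm{Supp}_{++}\le 0$, the truncation ${\bf x}'$ is a mixed multiplicity system of $M/x_1M$, so $\chi({\bf x}',M/x_1M)$ is defined and is computed from the same lengths $\ell_A[H_j({\bf x}',M/x_1M)_{\bf n}]=\ell_A[H_j({\bf x},M)_{\bf n}]$; hence $\chi({\bf x},M)=\chi({\bf x}',M/x_1M)$.

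The content is Koszul bookkeeping in the spirit of Auslander--Buchsbaum rather than anything conceptual, and the one place that genuinely demands care --- and the one place the hypotheses enter --- is verifying that the truncation ${\bf x}'$ really is a mixed multiplicity system of the module on which it acts in (ii) and (iii); this is false for an arbitrary subsequence, and it is precisely what ``$x_1\in\sqrt{\mathrm{Ann}_SM}$'', respectively ``$x_1$ is $M$-regular'', guarantees, so that every Euler--Poincare characteristic above is well defined and the alternating length sums may legitimately be compared term by term.
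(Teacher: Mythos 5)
Your proof is correct, and parts (i) and (iii) follow essentially the paper's own route: (i) is the long exact Koszul homology sequence obtained by tensoring the short exact sequence with the free complex $K_{\bullet}({\bf x},S)$, plus additivity of length (the paper likewise invokes Lemma \ref{lem 3.30} first so that all three characteristics are defined), and (iii) rests on the isomorphism $H_j({\bf x},M)\cong H_j({\bf x}',M/x_1M)$ for $x_1$ regular, which the paper simply quotes as \cite[Corollary 1.6.13(b)]{BH1} and you re-derive. Part (ii) is where you genuinely diverge. The paper first treats the case $x_1M=0$, where $x_1$ kills all of $H_{\bullet}({\bf x}',M)$, so the connecting maps in the sequence of \cite[Corollary 1.6.13(a)]{BH1} vanish, the sequence breaks into short exact pieces, and the alternating sum telescopes; it then reaches the general case $x_1\in\sqrt{\mathrm{Ann}_SM}$ by combining part (i) with the exact sequence $0\to x_1M\to M\to M/x_1M\to 0$ to get $\chi({\bf x},M)=\chi({\bf x},x_1^uM)=0$. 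You instead handle arbitrary $x_1\in\sqrt{\mathrm{Ann}_SM}$ in one pass, using the hypothesis only to guarantee that ${\bf x}'$ is a mixed multiplicity system of $M$, so that in the stable range the graded map $x_1\colon H_j({\bf x}',M)_{{\bf n}-{\bf e}_i}\to H_j({\bf x}',M)_{\bf n}$ connects finite-length modules of equal length and hence has $\ell(\ker)=\ell(\mathrm{coker})=c_j$. This buys you a shorter argument with no devissage through the powers $x_1^jM$ and no appeal to (i) inside (ii), at the cost of one boundary check you left implicit: the telescoping of $\sum_j(-1)^j(c_j+c_{j-1})$ leaves a residual term $(-1)^nc_n$, so besides $c_{-1}=0$ you also need $c_n=0$; this holds because ${\bf x}'$ has only $n-1$ elements, whence $H_n({\bf x}',M)=0$. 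With that one line added your version of (ii) is complete; the paper's version is more elementary at each step, yours is more direct.
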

\begin{proof} Without loss of generality, we may assume that  $\mathbb{N}^d$-graded $S$-homomorphisms in this proof are $\mathbb{N}^d$-graded $S$-homomorphisms of degree $\bf 0$.

The proof of (i): Since the Koszul complex is an exact functor, the exact sequence
of $\mathbb{N}^d$-graded $S$-modules
$$0\longrightarrow M' \longrightarrow M \longrightarrow M"\longrightarrow 0$$
yields a long exact  sequence
$$\cdots\longrightarrow H_i({\bf x},M') \longrightarrow H_i({\bf x},M)\longrightarrow
H_i({\bf x},M")\longrightarrow H_{i-1}({\bf x},M')\longrightarrow
\cdots$$ of $\mathbb{N}^d$-graded homology $S$-modules.
 Hence for each $\mathrm{\bf n}\in \mathbb{N}^d$ we have an exact  sequence of $A$-modules
$$\cdots\longrightarrow H_i({\bf x},M')_{\mathrm{\bf n}} \longrightarrow
H_i({\bf x},M)_{\mathrm{\bf n}}\longrightarrow H_i({\bf x},M")_{\mathrm{\bf n}}\longrightarrow
H_{i-1}({\bf x},M')_{\mathrm{\bf n}}\longrightarrow \cdots.$$
Recall that ${\bf x}$ is also a mixed  multiplicity system of both
$M'$ and $M"$ by Lemma \ref{lem 3.30}. Hence by the additivity of
length, the alternating sum of the lengths of the modules in this
exact sequence is zero. This fact follows (i).

The proof of (ii): First we consider the case that $x_1 \in \mathrm{Ann}_SM.$ Recall that ${\bf x}' =  x_2,\ldots, x_n$. By \cite[Corollary 1.6.13(a)]{BH1}, we have the following exact sequence
$$\cdots\xrightarrow{\pm x_1} H_i({\bf x}',M) \longrightarrow H_{i}({\bf x},M)\longrightarrow H_{i-1}({\bf x}',M)\xrightarrow{\pm x_1} H_{i-1}({\bf x}',M)\longrightarrow \cdots.\eqno (3.1)$$
Since $x_1M = 0,$ ${\bf x}'$ is also a mixed  multiplicity system of $M.$ Moreover,   $x_1$ annihilates $H_{i}({\bf x}',M)$ for all $i$. Hence the exact sequence (3.1) breaks up into exact sequences
$$0\longrightarrow H_i({\bf x}',M) \longrightarrow H_{i}({\bf x},M)\longrightarrow H_{i-1}({\bf x}',M)\longrightarrow 0$$
for all $i.$ Therefore $$\ell_A[H_i({\bf x},M)_{\mathrm{\bf n}}] = \ell_A[H_i({\bf x}',M)_{\mathrm{\bf n}}] + \ell_A[H_{i-1}({\bf x}',M)_{\mathrm{\bf n}}]$$
for all $i$ and for all large  ${\mathrm{\bf n}}$. Note  that
$H_{n + 1}({\bf x},M) = H_{-1}({\bf x},M) = 0,$ so
$$
 \sum_{i=0}^{n+1} (-1)^i \ell_A[H_i({\bf x},M)_{\mathrm{\bf n}}]
= \sum_{i=0}^{n+1} (-1)^i \bigg[\ell_A[H_i({\bf x}',M)_{\mathrm{\bf n}}] + \ell_A[H_{i-1}({\bf x}',M)_{\mathrm{\bf n}}]\bigg] = 0 \eqno (3.2)
$$
for all large ${\bf n}.$ Since  $\ell_A[H_{n+1}({\bf
x},M)_{\mathrm{\bf n}}] = 0$ and $$\chi({\bf x}, M) =
\sum_{i=0}^{n} (-1)^i \ell_A[H_i({\bf x},M)_{\mathrm{\bf n}}]$$
for all ${\bf n} \gg \bf 0,$ $\chi({\bf x},M)= 0$ by (3.2). So if
$x_1M = 0$, then $\chi({\bf x},M)= 0.$ From this it follows that
$\chi({\bf x},M/aM)= 0$ for all $a \in {\bf x}.$

We turn now to the case that $x_1 \in \sqrt{\mathrm{Ann}_SM}.$
Then there exists $u > 0$ such that
$x_1^u M = 0_M.$
On the other hand since $\chi({\bf x},M/x_1M)= 0$ and the exact sequence
$$0\longrightarrow x_1M \longrightarrow M \longrightarrow M/x_1M\longrightarrow 0,$$ it follows by (i) that
$\chi({\bf x},M)= \chi({\bf x},x_1M).$ Therefore, $\chi({\bf x},M)= \chi({\bf x},x_1^jM)$ for all $j \ge 1.$ Consequently, $\chi({\bf x},M)= \chi({\bf x},x_1^uM) = \chi({\bf x}, 0_M) = 0.$
 We get (ii).

The proof of (iii): Since  $x_1$ is $M$-regular, we have $$H_i({\bf x},M) \cong H_i({\bf x}',M/x_1M)$$ by \cite[Corollary 1.6.13(b)]{BH1}. Thus
$$\ell_A[H_i({\bf x},M)_{\mathrm{\bf n}}] = \ell_A[H_i({\bf x}',M/x_1M)_{\mathrm{\bf n}}]$$
for all large ${\mathrm{\bf n}}$ and for all $i$. From this it follows (iii).
\end{proof}

Using  Lemma \ref{lem 3.4},  we prove the following recursion formula for the Euler-Poincare characteristic of a graded module with respect to a mixed  multiplicity system.

 \vskip 0.2cm
\begin{lemma}\label{th3.6}
 Let ${\bf x} = x_1,\ldots, x_n$ be a mixed  multiplicity system of the module $M.$  Set ${\bf x}' =  x_2,\ldots, x_n.$ Then  we have
$$\chi({\bf x}, M) =\chi({\bf x}', M/x_1M)- \chi({\bf x}', 0_M: x_1).$$
\end{lemma}
\begin{proof} Consider the following cases.

If  $x_1 \in \sqrt{\mathrm{Ann}_SM},$ then $\chi({\bf x}, M) = 0$
by Lemma \ref{lem 3.4}(ii). In this case,
$$\mathrm{Supp}_{++}(M/{\bf x}M) = \mathrm{Supp}_{++}(M/{\bf x}'M).$$ Hence
${\bf x}'$ is also a mixed multiplicity system of $M.$ Therefore
since the exact sequence
$$0\longrightarrow (0_M:x_1) \longrightarrow M \xrightarrow{\;\;x_1\;\;} M\longrightarrow M/x_1M\longrightarrow 0,$$
we have $\chi({\bf x}', M/x_1M)- \chi({\bf x}', 0_M: x_1) = 0$  by Lemma \ref{lem 3.4}(i).
So $$\chi({\bf x}, M) =\chi({\bf x}', M/x_1M)- \chi({\bf x}', 0_M: x_1).$$

In the case that $x_1 \notin \sqrt{\mathrm{Ann}_SM}$ and set $ D =
0_M: x_1^{\infty},$ then $x_1$ is $(M/D)$-regular.  Since $ x_1M
\bigcap D = x_1D,$ we have
 the exact sequence
 $$0\longrightarrow \frac{D}{x_1D}\longrightarrow \frac{M}{x_1M} \longrightarrow \frac{M}{x_1M +D} \longrightarrow 0.$$
Hence $$\chi({\bf x}', \frac{M}{x_1M+D}) = \chi({\bf x}',
\frac{M}{x_1M})- \chi({\bf x}', \frac{D}{x_1D})$$ by Lemma
\ref{lem 3.4}(i). Since $M$ is Noetherian, $D = 0_M: x_1^v$ for
some $v > 0.$ Hence $x_1  \in \sqrt{\mathrm{Ann}_SD}.$ So ${\bf
x}'$ is also a mixed  multiplicity system of $D.$ On the other
hand $\chi({\bf x}', \frac{D}{x_1D}) = \chi({\bf x}', 0_M:x_1)$
since the exact sequence
$$0\longrightarrow (0_M:x_1) \longrightarrow D \xrightarrow{\;\;x_1\;\;} D\longrightarrow \frac{D}{x_1D}\longrightarrow 0.$$
Thus
$$\chi({\bf x}', \frac{M}{x_1M+D}) = \chi({\bf x}', \frac{M}{x_1M})- \chi({\bf x}', 0_M:x_1).\eqno(3.3)$$
Recall that $x_1$ is  $(M/D)$-regular.  So
 $\chi({\bf x}', \frac{M}{x_1M+D}) = \chi({\bf x}, \frac{M}{D})$ by
Lemma \ref{lem 3.4}(iii). Now since $\chi({\bf x}, \frac{M}{D})=
\chi({\bf x}, M)-\chi({\bf x}, D)$ by Lemma \ref{lem 3.4}(i), we
get
$$\chi({\bf x}', \frac{M}{x_1M+D}) = \chi({\bf x}, M)-\chi({\bf x}, D).$$
Note that $x_1  \in \sqrt{\mathrm{Ann}_SD},$ hence
$\chi({\bf x}, D) = 0$ by Lemma \ref{lem 3.4}(ii). So
$$\chi({\bf x}', \frac{M}{x_1M+D}) = \chi({\bf x}, M).\eqno(3.4)$$
By (3.3) and (3.4) we obtain  $$\chi({\bf x}, M) =\chi({\bf x}', M/x_1M)- \chi({\bf x}', 0_M: x_1).$$
The lemma is proved.\end{proof}

Next we construct an invariant that is called the mixed multiplicity symbol with respect to  a mixed  multiplicity system.

 \vskip 0.2cm
\begin{definition}\label{de3.4} Let ${\bf x} = x_1,\ldots, x_n$ be a mixed  multiplicity
system of $M.$ If $n=0,$ then $\ell_A(M_{\mathrm{\bf n}})= c
\;(\mathrm{const})$ for all  $\mathrm{\bf n} \gg \bf 0$ and we set
$\widetilde{e}({\bf x}, M) = \widetilde{e}(\emptyset, M)= c.$ If
$n > 0,$ we set $$\widetilde{e}({\bf x}, M) = \widetilde{e}({\bf
x}', M/x_1M) - \widetilde{e}({\bf x}', 0_M:x_1),$$ here ${\bf x}'
=  x_2,\ldots, x_n.$ We call $\widetilde{e}({\bf x}, M)$ the {\it
mixed multiplicity symbol of $M$ with respect to $ {\bf x} .$}

\end{definition}

Then the relationship between the Euler-Poincare characteristic  and
the mixed multiplicity symbol with respect to a mixed  multiplicity system of $M$ is given by the following proposition.

\begin{proposition}\label{th3.5}  Let ${\bf x}$ be a mixed  multiplicity system of $M.$ Then we have $$ \chi({\bf x}, M) = \widetilde{e}({\bf x}, M).$$
\end{proposition}
\begin{proof} Note that if the length of ${\bf x}$ is equal to $0$, then we have
$$\chi({\bf x}, M) = \ell_A[H_0({\bf x},M)_{\mathrm{\bf n}}] = \ell_A(M_{\mathrm{\bf n}}) = \widetilde{e}({\bf x}, M)$$
for all large ${\mathrm{\bf n}}$. Consequently  the assertion follows from Lemma \ref{th3.6} and the definition of the mixed multiplicity symbol of $M$ with respect to ${\bf x}$ (see Definition \ref{de3.4}).
\end{proof}

The Euler-Poincare characteristic of a mixed  multiplicity system
of $M$ of the type  $\mathrm{\bf k}$ and the ${\bf k}$-difference
of the Hilbert polynomial $P_M(\mathrm{\bf n})$ are directly
linked by the following proposition.

 \vskip 0.2cm
\begin{proposition}\label{th3.11}  Let $P_M(\mathrm{\bf n})$ be the Hilbert polynomial of the Hilbert
function $\ell_A(M_{\mathrm{\bf n}})$. Then for any mixed
multiplicity system ${\bf x}$ of $M$ of the type  $\mathbf{k}\in
\mathbb{N}^d,$
 we have $ \chi({\bf x}, M) =  \triangle ^{\mathrm{\bf k}}P_M(\mathrm{\bf n}).$
\end{proposition}

\begin{proof}  Set $\mid \mathrm{\bf
k}\mid = s.$ Denote by $r$ the number of all non-zero elements in
$ {\bf x} .$
  We  will  prove that
  $\chi({\bf x}, M) = \triangle ^{\mathrm{\bf k}}P_M(\mathrm{\bf n})$
  by induction on $r.$

Consider the case that $r = 0.$ Then $\dim\mathrm{Supp}_{++}M\leq
0,$ so $\deg P_M(\mathrm{\bf n}) \leq  0.$ Now if $s = 0,$ then we
have  $\chi({\bf x}, M) = \ell_A[H_0({\bf x},M)_{\mathrm{\bf n}}]=
\ell_A(M_{\mathrm{\bf n}})$ for all $ \mathrm{\bf n} \gg \bf  0$
and
$$\triangle ^{\mathrm{\bf k}}P_M(\mathrm{\bf n})=
\triangle ^{\mathrm{\bf 0}}P_M(\mathrm{\bf n})=
\ell_A(M_{\mathrm{\bf n}})$$ for all $ \mathrm{\bf n} \gg \bf 0.$
Hence $\chi({\bf x}, M) = \triangle ^{\mathrm{\bf
k}}P_M(\mathrm{\bf n}).$ If $s > 0,$ then $\chi({\bf x}, M)= 0$ by
Lemma \ref{lem 3.4}(ii).
 Since $\mid \mathrm{\bf k}\mid = s > 0$ and $\deg P_M(\mathrm{\bf n}) \leq 0,$
 it follows that $\triangle ^{\mathrm{\bf k}}P_M(\mathrm{\bf n}) = 0.$
 Thus $$\chi({\bf x}, M) = \triangle ^{\mathrm{\bf k}}P_M(\mathrm{\bf n}).$$ Therefore,
  if $r = 0,$ then
$\chi({\bf x}, M) = \triangle ^{\mathrm{\bf k}}P_M(\mathrm{\bf
n}).$

Next assume  that $r>0;$  ${\bf x} = x_1,\ldots, x_s$ and $x_1 \ne
0.$ Set ${\bf x}' =  x_2,\ldots, x_s.$  Let $$0 = M_0 \subseteq
M_1 \subseteq M_2 \subseteq\cdots \subseteq M_u=M$$ be a prime
filtration of $M,$ i.e., $M_{j+1}/M_j \cong S/P_j$ where  $P_j$ is
a homogeneous prime ideal for all $0 \le j \le u-1$ and
$\{P_0,P_1,\ldots,P_{u-1}\} \subseteq \text{Supp}M.$ Then we get
$$P_M(\mathrm{\bf n}) = \sum_{j = 0}^{u-1}P_{S/P_j}(\mathrm{\bf
n}).$$ By Lemma \ref{lem 3.30}, ${\bf x}$ is a  mixed multiplicity
system of  $S/P_j$ for each $0 \le j \le u-1.$ And by Lemma
\ref{lem 3.4}(i), it follows that $\chi({\bf x},M) = \sum_{j =
0}^{u-1}\chi({\bf x}, S/P_j).$

Now if $x_1 \in P_j$ and denote by $\overline {\bf x}$ the image
of ${\bf x}$ in $S/P_j$   and consider $\chi(\overline {\bf x},
S/P_j)$ as the  Euler-Poincare characteristic of the
$S/P_j$-module $S/P_j$ with respect to $\overline {\bf x}$, then
$\chi(\overline {\bf x}, S/P_j)= \chi({\bf x}, S/P_j)$ and
$\overline x_1 = 0$ in $S/P_j.$ Hence if we call $v$ the number of
all non-zero elements in $\overline {\bf x}$, then $v < r.$  Then
  by the inductive assumption  we have
$\triangle ^{\mathrm{\bf k}}P_{S/P_j}(\mathrm{\bf n}) =
\chi({\overline {\bf x}}, S/P_j).$ Therefore $\triangle
^{\mathrm{\bf k}}P_{S/P_j}(\mathrm{\bf n})= \chi({\bf x}, S/P_j).$

In the case that $x_1 \notin P_j,$ then $x_1$ is $S/P_j$-regular
of $S$-module $S/P_j.$ Now assume that $x_1 \in S_i.$  Then by
\cite[Remark 2.6]{Vi6}, it follows that $\triangle ^{{\bf
e}_i}P_{S/P_j}(\mathrm{\bf n}) = P_{S/(x_1,P_j)}(\mathrm{\bf n}).$
 Since the number of all non-zero
elements in ${\bf x}'$ is less than $r$ and ${\bf x}'$ is a  mixed
multiplicity system of the type $ {\bf k}- {\bf e}_i \in
\mathbb{N}^d$ of $S/(x_1,P_j)$, we get $$\triangle ^{{\bf k}- {\bf
e}_i}P_{S/(x_1,P_j)}(\mathrm{\bf n})= \chi({\bf x}',
S/(x_1,P_j))$$ by the inductive assumption.  Note that
$$\triangle ^{{\bf k}- {\bf e}_i}P_{S/(x_1,P_j)}(\mathrm{\bf n})=
\triangle ^{{\bf k}- {\bf e}_i}[\triangle ^{{\bf
e}_i}P_{S/P_j}(\mathrm{\bf n})]= \triangle ^{\mathrm{\bf
k}}P_{S/P_j}(\mathrm{\bf n}).$$ Since $x_1$ is $S/P_j$-regular of
$S$-module $S/P_j,$ we have $\chi({\bf x}', S/(x_1,P_j))=\chi({\bf
x}, S/P_j)$ by  Lemma \ref{lem 3.4}(iii).  So $\triangle
^{\mathrm{\bf k}}P_{S/P_j}(\mathrm{\bf n}) = \chi({\bf x}, S/P_j)$
for each $0 \le j \le u-1.$
  Consequently
$$\triangle ^{\mathrm{\bf k}}P_M(\mathrm{\bf n}) = \sum_{j = 0}^{u-1}\triangle ^{\mathrm{\bf k}}P_{S/P_j}(\mathrm{\bf n})= \sum_{j = 0}^{u-1}\chi({\bf x}, S/P_j)= \chi({\bf x},M).$$
Thus $\chi({\bf x}, M) = \triangle ^{\mathrm{\bf
k}}P_M(\mathrm{\bf n}).$ Induction is complete.
\end{proof}
  Proposition \ref{th3.5} and  Proposition \ref{th3.11} yield:
 \vskip 0.2cm
\begin{theorem}\label{th3.31} Let $S$ be a finitely generated standard $\mathbb{N}^d$-graded
algebra over  an Artinian local ring $A$ and let $M$ be  a
finitely generated standard $\mathbb{N}^d$-graded $S$-module. Then
for any mixed  multiplicity system ${\bf x}$ of $M$ of the type
$\mathrm{\bf k},$
 we have
$$\chi({\bf x}, M) = \widetilde{e}({\bf x}, M)= \triangle
^{\mathrm{\bf k}}P_M(\mathrm{\bf n}).$$
\end{theorem}

\begin{remark}\label{re 3.3}\rm  From Theorem \ref{th3.31}, it follows that if $\mid \mathrm{\bf k}\mid \; > \dim \mathrm{Supp}_{++}M,$
then $\chi({\bf x}, M) = \widetilde{e}({\bf x}, M) = 0$ since
$\triangle ^{\mathrm{\bf k}}P_M(\mathrm{\bf n})=0.$ Moreover, the
mixed multiplicity
 symbol $\widetilde{e}({\bf x}, M)$
 does not depend on the order of the elements of ${\bf x}$ and the
 Euler-Poincare characteristic $\chi({\bf x}, M)$   depends only on the type of $ {\bf x} .$
\end{remark} Now if $\mid\mathrm{\bf k}\mid \geq  \dim
\mathrm{Supp}_{++}M,$ then we have  $\triangle ^{\mathrm{\bf
k}}P_M(\mathrm{\bf n}) = E(M;\mathrm{\bf k}).$  Hence by Theorem
\ref{th3.31}, we obtain the following  result.

 \vskip 0.2cm
\begin{theorem}\label{th3.20}  Let  \;$\mathbf{k}\in \mathbb{N}^d$ such that \;
 $\dim \mathrm{Supp}_{++}M \le\; \mid \mathrm{\bf k}\mid.$
Then  for any mixed  multiplicity system ${\bf x}$ of $M$ of the
type  $\mathrm{\bf k},$ we have $E(M;\mathrm{\bf k})=\chi({\bf x},
M) = \widetilde{e}({\bf x}, M).$
\end{theorem}

Recall that $S_{\bf 1}= S_{(1,\ldots,1)}.$ Now if  $S_{\bf
1}\nsubseteq \sqrt{\mathrm{Ann}_S M},$ then  $ \dim
\mathrm{Supp}_{++}M \geq 0.$ Set $ \dim \mathrm{Supp}_{++}M = s.$
Then we have $E(M;\mathrm{\bf k}) = e(M;\mathrm{\bf k})$ for each
$ \mathrm{\bf k}\in \mathbb{N}^d$ with $\mid \mathrm{\bf k}\mid
=s.$ Hence from  Theorem \ref{th3.20}  we immediately get the
following strong  result.
 \vskip 0.2cm

\begin{theorem}\label{th3.13} Assume that $S_{\bf 1}\nsubseteq \sqrt{\mathrm{Ann}_S M}.$
 Set $
\dim \mathrm{Supp}_{++}M = s.$  Let ${\bf x}$ be  a  mixed
multiplicity system of $M$ of the type  $ \mathrm{\bf k}\in
\mathbb{N}^d$ with $\mid \mathrm{\bf k}\mid =s.$ Then we have
$$e(M;\mathrm{\bf k}) = \chi({\bf x}, M)= \widetilde{e}({\bf x},
M).$$
\end{theorem}

For any $\mathrm{\bf k}\in \mathbb{N}^d$ with $\mid {\bf k} \mid =
\dim\mathrm{Supp}_{++}M,$ there exists a  mixed  multiplicity
system ${\bf x} = x_1,\ldots,x_s$ of $M$ of the type
 $\mathrm{\bf k}$ such that   for any  $ 0 \le i \le s,$
$$\dim\mathrm{Supp}_{++}\bigg[\dfrac{M}{(x_1,\ldots,
x_i)M}\bigg]\leq \dim\mathrm{Supp}_{++}M - i$$ by Remark \ref{re
3.0}. By Theorem \ref{th3.13}, we have $e(M;\mathrm{\bf k}) =
\widetilde{e}({\bf x}, M).$ Set ${\bf x}' = x_2,\ldots,x_s.$ And
assume that $x_1 \in S_i$.  Since $\widetilde{e}({\bf x}, M)=
\widetilde{e}({\bf x}', M/x_1M)- \widetilde{e}({\bf x}',
0_M:x_1),$  it follows that $ \widetilde{e}({\bf x}', M/x_1M) \ge
e(M;\mathrm{\bf k}).$  Note that   ${\bf x}'$ is a  mixed
multiplicity system  of the type $\mathrm{\bf k}- \mathrm{\bf
e}_i$ of both ${M/x_1M}$ and $0_M : x_1.$ Now assume that
$e(M;\mathrm{\bf k}) >  0.$  Then we have $ \widetilde{e}({\bf
x}', M/x_1M) > 0.$ Hence since $
\dim\mathrm{Supp}_{++}\big(\frac{M}{x_1M}\big) \le s-1,$ we get $
\dim\mathrm{Supp}_{++}\big(\frac{M}{x_1M}\big)= s-1 $ by Remark
\ref{re 3.3}. So in this case, we have
$$\dim\mathrm{Supp}_{++}\bigg(\frac{M}{x_1M}\bigg) =  \dim\mathrm{Supp}_{++}{M}-1.$$
Hence $\dim\mathrm{Supp}_{++}(0_M:x_1) \leq
\dim\mathrm{Supp}_{++}{M}-1$ by Remark \ref{re2.2}(ii). Remember that  by Theorem
\ref{th3.13}, we have $e(M/x_1M;\mathrm{\bf k}-{\bf e}_i) =
\widetilde{e}({\bf x}', M/x_1M).$ Therefore $e(M/x_1M;\mathrm{\bf
k}-{\bf e}_i) > 0.$ Consequently, by induction we easily get
$$\dim\mathrm{Supp}_{++}\big[\frac{M}{(x_1,\ldots,x_i)M}\big] =
\dim\mathrm{Supp}_{++}{M}-i\ge
\dim\mathrm{Supp}_{++}\frac{(x_1,\ldots,x_{i-1})M: x_i}
{(x_1,\ldots,x_{i-1})M}$$ for all $1 \le i \le s.$ Hence
$\dim\mathrm{Supp}_{++}\big(\frac{M}{{\bf x}M}\big) = 0.$  Denote
by $\mathrm{\bf h}_i= ({h_{i}}_1,\ldots,{h_{i}}_d)$ the type of
the subsequence $ x_1,\ldots,x_i$ (i.e., this sequence consists of
${h_{i}}_1$ elements of $S_1,\ldots,{h_{i}}_d$ elements of $S_d$)
of ${\bf x}$ for each $1\leq i \leq s.$ Then it is easily seen
that $x_{i+1},\ldots,x_s$ is a mixed  multiplicity system of
$\frac{(x_1,\ldots,x_{i-1})M: x_i}{(x_1,\ldots,x_{i-1})M}$ of the
type  $\mathrm{\bf k}- \mathrm{\bf h}_i$ and
$\dim\mathrm{Supp}_{++}\frac{(x_1,\ldots,x_{i-1})M:
x_i}{(x_1,\ldots,x_{i-1})M} \le \mid\mathrm{\bf k}-\mathrm{\bf
h}_i\mid$ for all $1\leq i \leq s.$  Hence
$$ \widetilde{e}\bigg(x_{i+1},\ldots,x_s, \frac{(x_1,\ldots,x_{i-1})M: x_i}{(x_1,
\ldots,x_{i-1})M}\bigg)= E\bigg(\frac{(x_1,\ldots,x_{i-1})M: x_i}{(x_1,\ldots,x_{i-1})M};
 {\bf k}-{\bf h}_i\bigg)$$ for all $1 \le i \le s$ by Theorem \ref{th3.20}.
 Note that
$$\widetilde{e}({\bf x},M) = \widetilde{e}\bigg(\emptyset, \frac{M}{{\bf x}M}\bigg)-
\sum_{i=1}^s\widetilde{e}\bigg(x_{i+1},\ldots,x_s,
\frac{(x_1,\ldots,x_{i-1})M: x_i}{(x_1,\ldots,x_{i-1})M}\bigg)$$
by Definition \ref{de3.4}  and $e(M;\mathrm{\bf k})=
\widetilde{e}({\bf x}, M)$ by Theorem \ref{th3.13}. Therefore
$$e(M;\mathrm{\bf k})= \widetilde{e}\bigg(\emptyset, \frac{M}{{\bf x}M}\bigg)-
 \sum_{i=1}^sE\bigg(\frac{(x_1,\ldots,x_{i-1})M: x_i}{(x_1,\ldots,x_{i-1})M};
  {\bf k}-{\bf h}_i\bigg).$$  Remember
that $\widetilde{e}\big(\emptyset, \frac{M}{{\bf x}M}\big) =
\ell_A\big[\big(\frac{M}{{\bf x}M}\big)_{\mathrm{\bf n}}\big]$ for
all large $\mathrm{\bf n}.$ The above facts yield:

 \vskip 0.2cm
\begin{corollary}\label{co3.17}
   Let ${\bf x} = x_1, \ldots, x_s$ be  a  mixed  multiplicity system  of $M$ of the
   type $\mathrm{\bf k}$  with $\mid {\bf k} \mid =
   \dim\mathrm{Supp}_{++}M$  and  $\dim\mathrm{Supp}_{++}\bigg[\dfrac{M}{(x_1,\ldots,
x_i)M}\bigg]\leq \dim\mathrm{Supp}_{++}M - i$  for any  $ 0 \le i
\le s.$
   Denote by $\mathrm{\bf h}_i= ({h_{i}}_1,\ldots,{h_{i}}_d)$ the type of the subsequence $ x_1,\ldots,x_i$ of ${\bf x}$ for each $1\leq i \leq s.$
 Assume that
$e(M;\mathrm{\bf k}) \ne 0.$
  Then the following statements hold.
\begin{itemize}
\item[$\mathrm{(i)}$]  For each $1\leq i \leq s,$ we have
$\dim\mathrm{Supp}_{++}\big[\frac{M}{(x_1, \ldots, x_i)M}\big] =
\dim\mathrm{Supp}_{++}{M}- i.$ \item[$\mathrm{(ii)}$]
$e(M;\mathrm{\bf k}) = \ell_A\big[\big(\frac{M}{{\bf
x}M}\big)_{\mathrm{\bf n}}\big] -
\sum_{i=1}^sE\bigg(\frac{(x_1,\ldots,x_{i-1})M:
x_i}{(x_1,\ldots,x_{i-1})M}; {\bf k}-{\bf h}_i\bigg)$ for all
large $\mathrm{\bf n}.$

\end{itemize}
\end{corollary}

Now assume that  ${\bf x}$ is  a  mixed  multiplicity system of
$M$ and  $a \in {\bf x}$ is an $S_{++}$-filter-regular element
with respect to  $M.$  Set ${\bf x}' = {\bf  x} \backslash \{a\}.$
Since  $a $ is an $S_{++}$-filter-regular element, we get
$\dim\mathrm{Supp}_{++}(0_M:a) < 0$ by Remark \ref{re2.2}(ii).
Consequently $\widetilde{e}({\bf x}', 0_M:a) = 0$ by Remark
\ref{re 3.3}. Therefore
$$\widetilde{e}({\bf x}, M) = \widetilde{e}({\bf x}', M/aM) - \widetilde{e}({\bf x}', 0_M:a)= \widetilde{e}({\bf x}', M/aM).$$
From this it follows that $\chi({\bf x}, M) = \chi({\bf x}', M/aM)$
by Proposition \ref {th3.5}.

We get the following corollary.

 \vskip 0.2cm

\begin{corollary}\label{co3.15}  Let  ${\bf x}$ be  a  mixed  multiplicity system of $M.$
 Let $a \in {\bf x}$ be an $S_{++}$-filter-regular element  with respect to  $M.$  Set ${\bf x}' = {\bf  x} \backslash \{a\}.$
Then
\begin{itemize}
\item[$\mathrm{(i)}$] $\chi({\bf x}, M) = \chi({\bf x}', M/aM).$
\item[$\mathrm{(ii)}$] $\widetilde{e}({\bf x}, M)= \widetilde{e}({\bf x}', M/aM).$
   \end{itemize}
   \end{corollary}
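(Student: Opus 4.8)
The plan is to strip the distinguished element $a$ off of ${\bf x}$ using the recursions defining $\widetilde e$ (Definition~\ref{de3.4}) and $\chi$ (Lemma~\ref{th3.6}), and then to observe that the correction term coming from $0_M:a$ vanishes precisely because $a$ is $S_{++}$-filter-regular. So the corollary should collapse to the principle recorded in Remark~\ref{re 3.3}: a module whose $S_{++}$-support has negative dimension has trivial Euler-Poincare characteristic and trivial mixed multiplicity symbol with respect to any sequence in $\bigcup_{j=1}^d S_j$.

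First I would reduce to the case $a = x_1$. If ${\bf x}$ is a mixed multiplicity system of $M$ of type ${\bf k}$, then the hypothesis $\dim\mathrm{Supp}_{++}M\le\mid{\bf k}\mid$ of Theorem~\ref{th3.20} holds automatically: passing to $M/bM$ lowers $\dim\mathrm{Supp}_{++}$ by at most one for any homogeneous $b$ (Krull's principal ideal theorem, via $\sqrt{\mathrm{Ann}_S(M/bM)}=\sqrt{\mathrm{Ann}_SM+(b)}$), so iterating over the $\mid{\bf k}\mid$ elements of ${\bf x}$ gives $\dim\mathrm{Supp}_{++}(M/{\bf x}M)\ge\dim\mathrm{Supp}_{++}M-\mid{\bf k}\mid$, whence $\dim\mathrm{Supp}_{++}M\le\mid{\bf k}\mid$. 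Therefore Theorem~\ref{th3.20} applies and gives $\widetilde e({\bf x},M)=\chi({\bf x},M)=E(M;{\bf k})$, a number depending only on $M$ and on the type ${\bf k}$; since any reordering of ${\bf x}$ is again a mixed multiplicity system of $M$ of type ${\bf k}$, the invariants $\widetilde e({\bf x},M)$ and $\chi({\bf x},M)$ do not change under reordering, so I may assume ${\bf x}=x_1,\dots,x_n$ with $x_1=a$ and put ${\bf x}'=x_2,\dots,x_n$. Here ${\bf x}'$ is a mixed multiplicity system of $M/aM$, because $(M/aM)/{\bf x}'(M/aM)\cong M/{\bf x}M$ has $\dim\mathrm{Supp}_{++}\le 0$.

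Next I would handle $0_M:a$. Since $a$ is an $S_{++}$-filter-regular element with respect to $M$, Definition~\ref{de2.1} gives $(0_M:a)_{\mathrm{\bf n}}=0$ for $\mathrm{\bf n}\gg 0$, so the Hilbert polynomial $P_{(0_M:a)}$ is the zero polynomial and $\dim\mathrm{Supp}_{++}(0_M:a)<0$ by the degree convention of Remark~\ref{re2.0}. By Remark~\ref{re 3.3} this forces $\chi({\bf x}',0_M:a)=\widetilde e({\bf x}',0_M:a)=0$ (and in particular ${\bf x}'$ is a mixed multiplicity system of $0_M:a$, so the symbols are defined). Now (ii) is immediate from Definition~\ref{de3.4}: $\widetilde e({\bf x},M)=\widetilde e({\bf x}',M/aM)-\widetilde e({\bf x}',0_M:a)=\widetilde e({\bf x}',M/aM)$. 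For (i) I would argue in the same way from Lemma~\ref{th3.6}, getting $\chi({\bf x},M)=\chi({\bf x}',M/aM)-\chi({\bf x}',0_M:a)=\chi({\bf x}',M/aM)$, or, alternatively, just apply Proposition~\ref{th3.5} to the two sides of (ii).

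I expect the only real point of care to be the reduction to $a=x_1$, that is, the order-independence of $\widetilde e$ and $\chi$: once one checks that the hypothesis of Theorem~\ref{th3.20} is automatic for a mixed multiplicity system, this is free and the remainder is a two-line computation. One could avoid Theorem~\ref{th3.20} by proving the recursions of Lemma~\ref{th3.6} and Definition~\ref{de3.4} in a version that singles out an arbitrary member of ${\bf x}$, but appealing to Theorem~\ref{th3.20} is the shorter route.
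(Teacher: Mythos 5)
Your main line is the paper's own: since $a$ is $S_{++}$-filter-regular with respect to $M$, one has $P_{(0_M:a)}(\mathrm{\bf n})=0$, hence $\dim\mathrm{Supp}_{++}(0_M:a)<0$, hence $\chi({\bf x}',0_M:a)=\widetilde e({\bf x}',0_M:a)=0$ by Remark \ref{re 3.3}; the recursions of Definition \ref{de3.4} and Lemma \ref{th3.6} (or a final appeal to Proposition \ref{th3.5}) then give (ii) and (i). That part is correct and coincides with what the paper does in the paragraph preceding the corollary.

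The gap is in your reduction to $a=x_1$. You assert that $\dim\mathrm{Supp}_{++}(M/bM)\ge\dim\mathrm{Supp}_{++}M-1$ for any homogeneous $b$, and deduce that every mixed multiplicity system of type $\mathrm{\bf k}$ automatically satisfies $\dim\mathrm{Supp}_{++}M\le\;\mid\mathrm{\bf k}\mid$, so that Theorem \ref{th3.20} applies. Both claims are false in the multigraded setting: the dimension in question is that of a subset of $\mathrm{Proj}\,S$, and the minimal primes of $\mathrm{Ann}_SM+(b)$ supplied by Krull's principal ideal theorem may all contain $S_{++}$ and hence drop out of $\mathrm{Supp}_{++}$. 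Concretely, take $A=k$, $S=k[x_1,x_2,x_3,y]$ bigraded with $\deg x_i=(1,0)$ and $\deg y=(0,1)$, and $M=S$: then $\dim\mathrm{Supp}_{++}M=2$ while $\mathrm{Supp}_{++}(M/yM)=\emptyset$, so ${\bf x}=y$ is a mixed multiplicity system of type $(0,1)$ with $\mid\mathrm{\bf k}\mid\;=1<\dim\mathrm{Supp}_{++}M$, and the hypothesis of Theorem \ref{th3.20} is not automatic. (Note that Remark \ref{re2.5} gives the opposite inequality, and only for filter-regular elements.) The repair is cheap and stays inside your own argument: Proposition \ref{th3.5} yields $\widetilde e({\bf y},M)=\chi({\bf y},M)$ for \emph{every} mixed multiplicity system ${\bf y}$, with no dimension hypothesis, and $\chi$ is manifestly independent of the order of the elements because the Koszul complex is; since being a mixed multiplicity system depends only on the underlying set, you may therefore reorder so that $a=x_1$ and proceed as you did. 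To your credit, you correctly identified the reordering as the one delicate point --- the paper itself silently writes the recursion $\widetilde e({\bf x},M)=\widetilde e({\bf x}',M/aM)-\widetilde e({\bf x}',0_M:a)$ for an arbitrary $a\in{\bf x}$ --- but the justification you chose does not work.
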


Let ${\bf x} = x_1, \ldots, x_s$ be an  $S_{++}$-filter-regular sequence
 of $M$ of the type $\mathrm{\bf k}$  with $\mid {\bf k} \mid \geq \dim\mathrm{Supp}_{++}M.$
  Then  ${\bf x}$ is a  mixed  multiplicity system of $M$
by Remark \ref{re 3.0}. And we have
 $$\widetilde{e}({\bf x}, M) = \widetilde{e}(x_{i+1},\ldots,x_s, M/(x_1,\ldots,x_i)M)$$
 for each $1 \le i \le s$ by Corollary \ref{co3.15}(ii).
Consequently, $$\widetilde{e}({\bf x}, M) =
\widetilde{e}\big(\emptyset, \frac{M}{{\bf x}M}\big) =
\ell_A\bigg[\big(\frac{M}{{\bf x}M}\big)_{\mathrm{\bf n}}\bigg]$$
for $\mathrm{\bf n} \gg \bf 0.$
  Note that $\ell_A\big[\big(\frac{M}{{\bf x}M}\big)_{\mathrm{\bf n}}\big] \ne 0$ for
  $\mathrm{\bf n} \gg \bf 0$    if and only if
  $\dim\mathrm{Supp}_{++}\big(\frac{M}{{\bf x}M}\big)= 0.$ So in this case,
$\mid \mathrm{\bf k}\mid = \dim \mathrm{Supp}_{++}M$ by Remark \ref{re 3.0}.

Hence  we have the following result.
 \vskip 0.2cm
\enlargethispage{1cm}

\begin{corollary} [see {\cite [Theorem 3.4]{Vi6}}] \label{co3.12}  Let ${\bf k}\in \mathbb{N}^d$ with $\mid {\bf k} \mid \ge \dim\mathrm{Supp}_{++}M.$  Assume that ${\bf x}$ is an  $S_{++}$-filter-regular sequence  with respect to  $M$ of the type ${\bf k}.$
Then we have
$$E(M;\mathrm{\bf k}) = \chi({\bf x}, M)= \widetilde{e}({\bf x}, M)=
 \ell_A\bigg[\bigg(\frac{M}{{\bf x}M}\bigg)_{\mathrm{\bf n}}\bigg]$$ for all large $\mathrm{\bf n}.$
And $E(M;\mathrm{\bf k})\ne 0 $\; if and only if
$\dim\mathrm{Supp}_{++}\big(\frac{M}{{\bf x}M}\big)= 0.$ In this
case, $\mid {\bf k} \mid = \dim\mathrm{Supp}_{++}M$ and
$E(M;\mathrm{\bf k}) = e(M;\mathrm{\bf k}).$
\end{corollary}
Let  $\Lambda$ be the set of all homogeneous prime ideals $P$ of  $S$ such that
$$P \in \mathrm{Supp}_{++}M \;\mathrm{ and}\; \dim \mathrm{Proj}\;(S/P) =
\dim \mathrm{Supp}_{++}M .$$   Then by \cite [Theorem 3.1]{VT1},
we have $$e(M;\mathrm{\bf k})= \sum_{P \in
\Lambda}\ell(M_P)e(S/P;\mathrm{\bf k}).$$ Recall that if $0 = M_0
\subseteq M_1 \subseteq M_2 \subseteq\cdots \subseteq M_u=M$ is a
prime  filtration of $M,$ then for any $P \in \Lambda,$ there
exists $0 \le i \le u-1$ such that  $S/P \cong M_{i+1}/M_i$ by the
proof of \cite [Theorem 3.1]{VT1}. Therefore, if ${\bf x}$ is  a
mixed  multiplicity system of $M,$ then ${\bf x}$ is also  a mixed
multiplicity system of $S/P$ for any $P \in \Lambda$ by Lemma
\ref{lem 3.30}.

 Hence by Theorem \ref{th3.13}, we immediately obtain the following corollary.

 \vskip 0.2cm
\begin{corollary}\label{co3.18}  Assume that  $S_{\bf 1}$ is not
contained in $ \sqrt{\mathrm{Ann}_S M}$. Denote by  $\Lambda$ the
set of all homogeneous prime ideals $P$ of $S$ such that $P \in
\mathrm{Supp}_{++}M$ and $\dim \mathrm{Proj}(S/P) = \dim
\mathrm{Supp}_{++}M .$  Set $ \dim \mathrm{Supp}_{++}M = s.$
Assume that ${\bf x}$ is  a  mixed  multiplicity system of $M$ of
the type  $\mathrm{\bf k}$  with $\mid \mathrm{\bf k}\mid \;=s.$
Then
\begin{itemize}
\item[$\mathrm{(i)}$] $\chi({\bf x}, M)= \sum_{P \in \Lambda}\ell(M_P)\chi({\bf x}, S/P).$
\item[$\mathrm{(ii)}$] $\widetilde{e}({\bf x}, M)= \sum_{P \in \Lambda}\ell(M_P)\widetilde{e}({\bf x}, S/P).$
\end{itemize}
\end{corollary}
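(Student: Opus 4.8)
The plan is to combine the prime-filtration decomposition of mixed multiplicities, $e(M;\mathrm{\bf k}) = \sum_{P\in\Lambda}\ell(M_P)\,e(S/P;\mathrm{\bf k})$ from \cite[Theorem 3.1]{VT1}, with Theorem \ref{th3.13}, which under the present hypotheses ($S_{\bf 1}\nsubseteq\sqrt{\mathrm{Ann}_SM}$ and $\mid\mathrm{\bf k}\mid = s = \dim\mathrm{Supp}_{++}M$) already yields $\chi({\bf x},M) = \widetilde{e}({\bf x},M) = e(M;\mathrm{\bf k})$. So the whole task reduces to rewriting each summand $e(S/P;\mathrm{\bf k})$, $P\in\Lambda$, as $\chi({\bf x},S/P) = \widetilde{e}({\bf x},S/P)$.

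First I would fix a prime filtration $0 = M_0\subseteq M_1\subseteq\cdots\subseteq M_u = M$ with $M_{j+1}/M_j\cong S/P_j$, and recall, as in the proof of \cite[Theorem 3.1]{VT1}, that every $P\in\Lambda$ occurs among the $P_j$. Applying Lemma \ref{lem 3.30} successively along this filtration shows that ${\bf x}$ is a mixed multiplicity system of each $S/P_j$, hence of $S/P$ for every $P\in\Lambda$, and --- being literally the same sequence --- of the same type $\mathrm{\bf k}$. Next I would check that Theorem \ref{th3.13} genuinely applies to the $S$-module $S/P$ for $P\in\Lambda$: since $S$ is standard, for a homogeneous prime $Q$ one has $Q\in\mathrm{Proj}\,S$ exactly when $S_{\bf 1}\nsubseteq Q$, so from $P\in\mathrm{Supp}_{++}M\subseteq\mathrm{Proj}\,S$ we get $S_{\bf 1}\nsubseteq P = \sqrt{\mathrm{Ann}_S(S/P)}$; and $\mathrm{Supp}_{++}(S/P)$ is identified with $\mathrm{Proj}(S/P)$, whence $\dim\mathrm{Supp}_{++}(S/P) = \dim\mathrm{Proj}(S/P) = s = \mid\mathrm{\bf k}\mid$ by the definition of $\Lambda$. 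Thus Theorem \ref{th3.13} gives $e(S/P;\mathrm{\bf k}) = \chi({\bf x},S/P) = \widetilde{e}({\bf x},S/P)$ for each $P\in\Lambda$.

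Substituting these identities into $e(M;\mathrm{\bf k}) = \sum_{P\in\Lambda}\ell(M_P)\,e(S/P;\mathrm{\bf k})$ and using $\chi({\bf x},M) = \widetilde{e}({\bf x},M) = e(M;\mathrm{\bf k})$ gives (i) and (ii) at once. The only point requiring care is the transfer of the hypotheses of Theorem \ref{th3.13} to the modules $S/P$ --- namely $S_{\bf 1}\nsubseteq P$ and $\dim\mathrm{Supp}_{++}(S/P) = s$; everything else (the structure of $\Lambda$, the occurrence of its members in a prime filtration, and the restriction of ${\bf x}$ to a mixed multiplicity system of each $S/P$) is already laid out in the discussion immediately preceding the corollary, so the remainder is routine.
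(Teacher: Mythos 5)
Your proof is correct and follows essentially the same route as the paper: the decomposition $e(M;\mathrm{\bf k})=\sum_{P\in\Lambda}\ell(M_P)e(S/P;\mathrm{\bf k})$ from \cite[Theorem 3.1]{VT1}, Lemma \ref{lem 3.30} applied along a prime filtration to see that ${\bf x}$ is a mixed multiplicity system of each $S/P$, and Theorem \ref{th3.13} applied to $M$ and to each $S/P$. Your explicit verification that $S_{\bf 1}\nsubseteq P$ and $\dim\mathrm{Supp}_{++}(S/P)=s$ for $P\in\Lambda$ is a detail the paper leaves implicit, and is a welcome addition.
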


\section{Applications to  Mixed Multiplicities of Ideals}

In this section,  we will give some applications of Section 3 to  mixed multiplicities of modules over local rings with respect to ideals.

 \vskip 0.2cm
Let
 $(R, \frak n)$  be  a  Noetherian   local ring with  maximal ideal $\frak{n}$  and infinite residue field $ R/\mathfrak{n}.$ Let $N$ be a finitely generated $R$-module.  Let $J, I_1,\ldots, I_d$ be ideals of $R$ with $J$ being  $\frak n$-primary   and $I_1\cdots I_d \nsubseteq \sqrt{\mathrm{Ann}_R{N}}.$ Recall that  $\mathrm{\bf k}= (k_1,\ldots,k_d) \in \mathbb{N}^d;$ $\mathrm{\bf n}= (n_1,\ldots,n_d)\in \mathbb{N}^d;$ $\mathrm{\bf I}= I_1,\ldots,I_d;$ $\mathrm{\bf I}^{[\mathrm{\bf k}]}= I_1^{[k_1]},\ldots,I_d^{[k_d]}$ and
 $\mathbb{I}^{\mathrm{\bf n}}= I_1^{n_1}\cdots I_d^{n_d}.$
  We get an $\mathbb{N}^{(d+1)}$-graded algebra and an $\mathbb{N}^{(d+1)}$-graded module:
$$ T  =\bigoplus_{n \ge 0,\; \mathrm{\bf n}\ge {\bf 0}}\dfrac{J^n\mathbb{I}^{\mathrm{\bf n}}}{J^{n+1}\mathbb{I}^{\mathrm{\bf n}}}\; \mathrm{and}\; {\cal N}  =\bigoplus_{n \ge 0,\; \mathrm{\bf n}\ge {\bf 0}}\dfrac{J^n\mathbb{I}^{\mathrm{\bf n}}N}{J^{n+1}\mathbb{I}^{\mathrm{\bf n}}N}.$$
Then  $T$ is a finitely generated standard
$\mathbb{N}^{(d+1)}$-graded algebra over an  Artinian local ring
$R/J$ and ${\cal N}$ is a finitely generated standard
$\mathbb{N}^{(d+1)}$-graded $T$-module. The mixed multiplicity of
${\cal N}$ of the type  $(k_0,\mathrm{\bf k})$ is denoted by
$e\big(J^{[k_0+1]},\mathrm{\bf I}^{[\mathrm{\bf k}]};N\big)$,
i.e.,
$$e\big(J^{[k_0+1]},\mathrm{\bf I}^{[\mathrm{\bf k}]};N\big) := e({\cal N}; k_0,\mathbf{k})$$  and which is called the {\it
mixed multiplicity of  $N$ with respect to ideals $J,\mathrm{\bf I}$
 of the type $(k_0+1,\mathrm{\bf k})$} (see \cite{HHRT,  MV, Ve}).
 Set $I=JI_1\cdots I_d;$ $I_0 = J;$ $T_i = I_i/JI_i$ for all
  $0 \le i \le d.$

 \vskip 0.2cm
\begin{remark}\label{de4.0}  On the
one hand $\deg P_{\cal N}(\mathrm{\bf n})= \dim
\mathrm{Supp}_{++}{\cal N}$ by \cite[Theorem 4.1]{HHRT}, and on
the other hand $\deg P_{\cal N}(\mathrm{\bf n})= \dim
\dfrac{N}{0_N: I^\infty}-1$ by \cite[Proposition 3.1(i)] {Vi}(see
\cite{MV}). Hence $ \dim \mathrm{Supp}_{++}{\cal N} = \dim
\dfrac{N}{0_N: I^\infty}-1.$
\end{remark}

Defining  mixed multiplicity systems of $\cal N$ is the reason for
using the following Rees superficial sequences.

 \vskip 0.2cm
\begin{definition}\label{de4.1}  An element $a \in R$ is called a {\it  Rees superficial element} of $N$ with respect to $\mathrm{\bf I}$ if there exists $i \in \{1, \ldots, d\}$ such that $a \in I_i$ and
 $aN \bigcap \mathbb{I}^{\mathrm{\bf n}}I_iN = a\mathbb{I}^{\mathrm{\bf n}}N$ for all
 $ \mathrm{\bf n}\gg \bf 0.$
Let $x_1, \ldots, x_t$ be a sequence in $R$.
 Then
$x_1, \ldots, x_t$ is called a {\it Rees superficial sequence}  of
$N$ with respect to $\mathrm{\bf I}$  if $x_{j + 1}$ is a Rees
superficial element  of $N/(x_1, \ldots, x_{j})N$  with respect to
$\mathrm{\bf I}$ for all $j = 0, 1, \ldots, t - 1.$ A Rees
superficial sequence of $N$  consisting of $k_1$ elements of
$I_1,\ldots,k_d$ elements of $I_d$ is called a Rees superficial
sequence of $N$ of {\it the type}  $\mathrm{\bf k}= (k_1
,\ldots,k_d).$
\end{definition}
 \vskip 0.2cm
\begin{remark}\label{re4.2} By \cite[Lemma 2.2]{MV} which is a generalized
version of \cite[Lemma 1.2]{Re}, for any set of ideals
$I_1,\ldots,I_d$ and each $\mathrm{\bf k}=(k_1,\ldots,k_d) \in
\mathbb{N}^{d},$ there exists a Rees superficial sequence of $N$
of the type  $\mathrm{\bf k}.$

Let ${\bf x}$ be a Rees superficial sequence of $N$ with respect
to ideals $J,\mathrm{\bf I}$ and let ${\bf x}^*$ be the image of
${\bf x}$ in $\bigcup_{i = 0}^dT_i.$ It can be verified (or see
\cite [Remark 4.1]{Vi6}) that $$\big({\cal N}/{\bf x}^*{\cal
N}\big)_{(m, \;\mathrm{\bf m})}\cong \bigg[\bigoplus_{n \ge 0,\;
\mathrm{\bf n}\ge {\bf 0}}\dfrac{J^n\mathbb{I}^{\mathrm{\bf
n}}(N/{\bf x}N)}{J^{n+1}\mathbb{I}^{\mathrm{\bf n}}(N/{\bf
x}N)}\bigg]_{(m, \;\mathrm{\bf m})}\eqno(4.1)$$ for all $m \gg
0;\; \mathrm{\bf m}\gg {\bf 0}.$ Hence by Remark \ref {de4.0}, we
have $$ \dim \mathrm{Supp}_{++}\frac{\cal N}{{\bf x}^*\cal N} =
\dim \dfrac{N}{{\bf x}N: I^\infty}-1.$$ So ${\bf x}^*$ is a  mixed
multiplicity system of $\cal N$ if and only if $\dim
\dfrac{N}{{\bf x}N: I^\infty}\le 1.$
\end{remark}

These comments lead us to define the following system.

 \vskip 0.2cm
\begin{definition}\label{de4.2} Let ${\bf x}$ be a Rees superficial sequence of $N$ with respect to ideals $J,\mathrm{\bf I}$ of the type $(k_0,\mathrm{\bf k}) \in \mathbb{N}^{d+1}.$ Then ${\bf x}$ is called a {\it mixed  multiplicity system of $N$ with respect to ideals $J,\mathrm{\bf I}$ of the type $(k_0,\mathrm{\bf k})$} if
$\dim \dfrac{N}{{\bf x}N: I^\infty}\le 1.$
\end{definition}

Under our point of view in this paper, both the above conditions of mixed  multiplicity systems  of $N$ are necessary to characterize mixed multiplicities of ideals in terms of the Euler-Poincare characteristic and
the mixed multiplicity symbol of $\cal N$ with respect to a mixed  multiplicity system of $\cal N.$

\vskip 0.2cm
\begin{remark}\label{re4.4}\rm  Remark \ref {de4.0} proves that if ${\bf x}$ is a Rees superficial sequence of $N$ with respect to $J,\mathrm{\bf I}$ and ${\bf x}^*$ is the image of ${\bf x}$ in $\bigcup_{i = 0}^dT_i,$ then ${\bf x}^*$ is a mixed  multiplicity system of ${\cal N}$ if and only if ${\bf x}$ is  a  mixed  multiplicity system of $N$ with respect to  $J,\mathrm{\bf I}.$
  \end{remark}
In  order to describe mixed multiplicity formulas of ideals we need to extend the notion of mixed multiplicities.

\begin{definition}\label{de4.5} Let   $k_0 \in \mathbb{N}$ and
$\mathrm{\bf k} \in \mathbb{N}^d$  such that
 $k_0 +\mid \mathrm{\bf k}\mid \; \ge \dim \dfrac{N}{0_N: I^\infty}-1.$ We assign
$$E(J^{[k_0+1]},\mathrm{\bf I}^{[\mathrm{\bf k}]}; N ) =
\begin{cases} e(J^{[k_0+1]},\mathrm{\bf I}^{[\mathrm{\bf k}]}; N )\; \text{ if } k_0 +\mid \mathrm{\bf k}\mid \; = \dim \dfrac{N}{0_N: I^\infty}-1, \\
0 \quad \quad \quad \quad \quad \quad\quad \text{ if } k_0+\mid \mathrm{\bf k}\mid  \;> \dim \dfrac{N}{0_N: I^\infty}-1.
\end{cases}$$
\end{definition}

 Put $I_0 = J.$
Next, assume that $a \in I_i$ is a Rees superficial element of $N$
with respect to $J, \mathrm{\bf I}$ and $a^*$ is the image of $a$
in $T_i.$ Then we have
\begin{align*}
&(J^{n+1}\mathbb{I}^\mathrm{\bf n}I_iN:a) \bigcap J^n\mathbb{I}^\mathrm{\bf n}N \\
&= \big((J^{n+1}\mathbb{I}^\mathrm{\bf n}I_iN\bigcap aN):a\big)\bigcap J^n\mathbb{I}^\mathrm{\bf n}N \\
&= \big(aJ^{n+1}\mathbb{I}^\mathrm{\bf n}N:a\big) \bigcap J^n\mathbb{I}^\mathrm{\bf n}N \\
&= \big(J^{n+1}\mathbb{I}^\mathrm{\bf n}N + (0_N : a)\big) \bigcap J^n\mathbb{I}^\mathrm{\bf n}N\\
&= J^{n+1}\mathbb{I}^\mathrm{\bf n}N  + (0_N : a) \bigcap J^{n}\mathbb{I}^\mathrm{\bf n}N
\end{align*}
 for all  $ n \gg 0;\;\mathrm{\bf n} \gg {\bf 0}.$ Consequently
$$(J^{n+1}\mathbb{I}^\mathrm{\bf n}I_iN:a) \bigcap J^n\mathbb{I}^\mathrm{\bf n}N= J^{n+1}\mathbb{I}^\mathrm{\bf n}N  + (0_N : a) \bigcap J^{n}\mathbb{I}^\mathrm{\bf n}N \eqno(4.2)$$
 for all  $n \gg 0;\;\mathrm{\bf n} \gg {\bf 0}.$
From (4.2) it follows that
\begin{align*}
(0_{\cal N}: a^*)_{(n,\;\mathrm{\bf n})} &= \dfrac{(J^{n+1}\mathbb{I}^\mathrm{\bf n}I_iN:a) \bigcap J^n\mathbb{I}^\mathrm{\bf n}N}{J^{n+1}\mathbb{I}^\mathrm{\bf n}N}\\
&= \dfrac{J^{n+1}\mathbb{I}^\mathrm{\bf n}N  + (0_N : a) \bigcap J^{n}\mathbb{I}^\mathrm{\bf n}N}{J^{n+1}\mathbb{I}^\mathrm{\bf n}N}\\
&= \dfrac{(0_N : a) \bigcap J^{n}\mathbb{I}^\mathrm{\bf n}N}{(0_N : a) \bigcap J^{n+1}\mathbb{I}^\mathrm{\bf n}N}
\end{align*} for all  $n \gg 0;\;\mathrm{\bf n} \gg {\bf 0}.$
 Remember that  $I=JI_1\cdots I_d.$
By Artin-Rees lemma, there exists $u \gg 0$ such that
$$(0_{\cal N}: a^*)_{(n+u,\; {\bf n}+u{\bf 1})} = \dfrac{[(0_N : a)\bigcap I^uN ]J^{n}\mathbb{I}^\mathrm{\bf n}}{[(0_N : a)\bigcap I^uN ] J^{n+1}\mathbb{I}^\mathrm{\bf n}}\eqno(4.3)$$
for all  $ n \ge 0;\;\mathrm{\bf n} \ge {\bf 0}.$ Fix the $u$ and
set $W = (0_N : a)\bigcap I^uN ;\; U = (0_N : a)/W.$ Since $W:
I^\infty = 0_N:aI^\infty \supseteq 0_N:a,$ it follows that
$U/0_U:I^\infty = 0.$ So $\dim U/0_U:I^\infty < 0.$ Hence since
the exact sequence $0\longrightarrow W \longrightarrow
(0_N:a)\longrightarrow U = (0_N:a)/W\longrightarrow 0,$ we get
that the mixed multiplicities of $(0_N:a)$ and the mixed
multiplicities of $W$ with respect to ideals $J,{\bf I}$ are the
same by \cite[Corollary 3.9 (ii)]{VT1}.
 So  $$E(J^{[k_0+1]},\mathrm{\bf I}^{[\mathrm{\bf k}]}; W ) = E(J^{[k_0+1]},\mathrm{\bf I}^{[\mathrm{\bf k}]}; 0_N:a).$$
On the other hand by (4.3), we have $E(0_{\cal N}:a^*; k_0,{\bf
k})= E(J^{[k_0+1]},\mathrm{\bf I}^{[\mathrm{\bf k}]}; W ).$ Hence
$E(0_{\cal N}:a^*; k_0,{\bf k})= E(J^{[k_0+1]},\mathrm{\bf
I}^{[\mathrm{\bf k}]}; 0_N:a).$ By (4.1), we get
$$E({\cal N}/a^*{\cal N}; k_0,{\bf k})= E(J^{[k_0+1]},\mathrm{\bf I}^{[\mathrm{\bf k}]}; N/aN).$$

The above facts yield:

\vskip 0.2cm
\noindent
\begin{remark}\label{re4.6} We have $E({\cal N}; k_0,\mathrm{\bf k}) = E(J^{[k_0+1]},
\mathrm{\bf I}^{[\mathrm{\bf k}]}; N ).$ And if $a \in I_i$ is a
Rees superficial element of $N$ with respect to $J, \mathrm{\bf
I}$ and $a^*$ is the image of $a$ in $T_i,$ then
\begin{align*}
E(0_{\cal N}:a^*; k_0,{\bf k})&= E(J^{[k_0+1]},\mathrm{\bf
I}^{[\mathrm{\bf k}]}; 0_N:a);\\E({\cal N}/a^*{\cal N}; k_0,{\bf
k})&= E(J^{[k_0+1]},\mathrm{\bf I}^{[\mathrm{\bf k}]};
N/aN).\end{align*} Moreover, if $\dim \dfrac{N}{0_N: I^\infty} =
1,$ then by \cite[Proposition 3.2]{Vi} it can easily be seen that
$E(J^{[1]},\mathrm{\bf I}^{[\mathrm{\bf 0}]}; N) = e(J
;\frac{N}{0_N: I^{\infty}})$.

\end{remark}
Now,  let ${\bf x}$ be  a  mixed  multiplicity system of $N$ with
respect to  $J,\mathrm{\bf I}$ of the type $(k_0, {\bf k})$ and
let ${\bf x}^*$ be the image of ${\bf x}$ in $\bigcup_{i =
0}^dT_i.$ Then ${\bf x}^*$ is a  mixed  multiplicity system of
$\cal N$ of the type $(k_0, {\bf k})$ by Remark \ref{re4.4}. Hence
$\widetilde{e}({\bf x}^*, {\cal N}) = \chi({\bf x}^*, {\cal N})$
by Proposition \ref {th3.5}. And
 $\widetilde{e}({\bf x}^*, {\cal N}) = \chi({\bf x}^*, {\cal N})=
 E({\cal N}; k_0,\mathrm{\bf k})$ if $k_0 + \mid{\bf k}\mid \ge
\dim \mathrm{Supp}_{++}{\cal N}$
  by Theorem \ref{th3.20}. On the one hand
$E({\cal N}; k_0,\mathrm{\bf k}) = E(J^{[k_0+1]},\mathrm{\bf I}^{[\mathrm{\bf k}]}; N )$ by Remark \ref{re4.6}. On the other hand
$ \dim \mathrm{Supp}_{++}{\cal N} = \dim \dfrac{N}{0_N: I^\infty}-1$ by
 Remark \ref{de4.0}. Therefore,
 $\widetilde{e}({\bf x}^*, {\cal N}) = \chi({\bf x}^*, {\cal N})=
 E(J^{[k_0+1]},\mathrm{\bf I}^{[\mathrm{\bf k}]}; N )$
 if
$k_0 + \mid{\bf k}\mid \ge \dim \dfrac{N}{0_N: I^\infty}-1.$
   Consequently, we have:

 \vskip 0.2cm
\begin{corollary}\label{co4.7} Let ${\bf x}$ be  a  mixed  multiplicity system of
$N$ with respect to ideals $J,\mathrm{\bf I}$ of the type $(k_0,\mathrm{\bf k})$ and let ${\bf x}^*$ be the image of ${\bf x}$ in $\bigcup_{i = 0}^dT_i.$ Then the following statements hold.
\begin{itemize}
\item[$\mathrm{(i)}$] ${\bf x}^*$ is a  mixed  multiplicity system
of $\cal N$ of the type $(k_0,\mathrm{\bf k}).$
\item[$\mathrm{(ii)}$] $\chi({\bf x}^*, {\cal N})=
\widetilde{e}({\bf x}^*, {\cal N}).$ \item[$\mathrm{(iii)}$] If
$k_0 + \mid{\bf k}\mid \; \ge \dim \dfrac{N}{0_N: I^\infty}-1,$
then $E(J^{[k_0+1]},\mathrm{\bf I}^{[\mathrm{\bf k}]}; N )=
\chi({\bf x}^*, {\cal N})= \widetilde{e}({\bf x}^*, {\cal N}) .$
\end{itemize}
\end{corollary}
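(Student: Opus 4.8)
The plan is to obtain all three assertions as formal consequences of the results already established in Section~3 together with Remarks \ref{de4.0}, \ref{re4.4} and \ref{re4.6}. The point to keep in mind throughout is that $T$ is a finitely generated standard $\mathbb{N}^{d+1}$-graded algebra over the artinian local ring $R/J$ and $\cal N$ is a finitely generated $\mathbb{N}^{d+1}$-graded $T$-module, so every statement of Section~3 applies verbatim to the pair $(T,{\cal N})$.

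For (i), recall that ${\bf x}$ is by hypothesis a Rees's superficial sequence of $N$ with respect to $J,\mathrm{\bf I}$ of the type $(k_0,\mathrm{\bf k})$ with $\dim\dfrac{N}{{\bf x}N:I^\infty}\le 1$. Passing to images, an element of ${\bf x}$ lying in $I_i$ maps to an element of $T_i$; hence ${\bf x}^*$ consists of $k_0$ elements of $T_0$, $k_1$ elements of $T_1,\ldots,k_d$ elements of $T_d$, i.e. ${\bf x}^*$ is a sequence in $\bigcup_{i=0}^dT_i$ of the type $(k_0,\mathrm{\bf k})$. Since $\dim\dfrac{N}{{\bf x}N:I^\infty}\le 1$, Remark \ref{re4.4} gives that ${\bf x}^*$ is a mixed multiplicity system of $\cal N$; combined with the multidegree count it is a mixed multiplicity system of $\cal N$ of the type $(k_0,\mathrm{\bf k})$, which is (i).

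Assertion (ii) is then immediate: applying Proposition \ref{th3.5} to the $T$-module $\cal N$ and its mixed multiplicity system ${\bf x}^*$ yields $\chi({\bf x}^*,{\cal N})=\widetilde{e}({\bf x}^*,{\cal N})$. For (iii), assume $k_0+\mid\mathrm{\bf k}\mid\;\ge\dim\dfrac{N}{0_N:I^\infty}-1$. By Remark \ref{de4.0} we have $\dim\mathrm{Supp}_{++}{\cal N}=\dim\dfrac{N}{0_N:I^\infty}-1$, so this inequality reads $\dim\mathrm{Supp}_{++}{\cal N}\le k_0+\mid\mathrm{\bf k}\mid$, i.e. the total degree of the type $(k_0,\mathrm{\bf k})$ of ${\bf x}^*$ is at least $\dim\mathrm{Supp}_{++}{\cal N}$. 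Theorem \ref{th3.20}, applied to $(T,{\cal N})$ and the mixed multiplicity system ${\bf x}^*$, then gives $E({\cal N};k_0,\mathrm{\bf k})=\chi({\bf x}^*,{\cal N})=\widetilde{e}({\bf x}^*,{\cal N})$. Since $E({\cal N};k_0,\mathrm{\bf k})=E(J^{[k_0+1]},\mathrm{\bf I}^{[\mathrm{\bf k}]};N)$ by Remark \ref{re4.6}, we conclude $E(J^{[k_0+1]},\mathrm{\bf I}^{[\mathrm{\bf k}]};N)=\chi({\bf x}^*,{\cal N})=\widetilde{e}({\bf x}^*,{\cal N})$, which is (iii).

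The one place I would expect to need care --- and the only step that is not purely a matter of quoting an earlier result in the right order --- is the bookkeeping underlying (i): that the image of a Rees's superficial sequence of type $(k_0,\mathrm{\bf k})$ really is a sequence of homogeneous elements of $\bigcup_{i=0}^dT_i$ of the same type, and that the condition $\dim\dfrac{N}{{\bf x}N:I^\infty}\le 1$ translates, via the isomorphism $(4.1)$ and Remark \ref{de4.0}, into $\dim\mathrm{Supp}_{++}\bigl(\dfrac{\cal N}{{\bf x}^*{\cal N}}\bigr)\le 0$. Both facts are already encoded in Remarks \ref{re4.2} and \ref{re4.4}, so no new estimate is required and the corollary follows formally.
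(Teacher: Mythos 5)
Your proposal is correct and follows essentially the same route as the paper: the paper also deduces (i) from Remark \ref{re4.4}, (ii) from Proposition \ref{th3.5} applied to the $T$-module $\cal N$, and (iii) from Theorem \ref{th3.20} combined with Remarks \ref{re4.6} and \ref{de4.0} to translate $\dim\mathrm{Supp}_{++}{\cal N}=\dim\frac{N}{0_N:I^\infty}-1$ and $E({\cal N};k_0,\mathrm{\bf k})=E(J^{[k_0+1]},\mathrm{\bf I}^{[\mathrm{\bf k}]};N)$. The bookkeeping point you flag about types being preserved under passage to ${\bf x}^*$ is exactly what Remarks \ref{re4.2} and \ref{re4.4} encode, so nothing further is needed.
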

In the case that $JI_1\cdots I_d$  is not contained in $ \sqrt{\mathrm{Ann}_R{N}},$ by Corollary \ref{co4.7} we immediately get the main result of this section.

 \vskip 0.4cm

\begin{theorem}\label{th4.8}
  Let  $J, I_1,\ldots, I_d$  be ideals of $R$ with   $J$ being  $\frak n$-primary.
   Assume  that $I=JI_1\cdots I_d$  is not contained in $ \sqrt{\mathrm{Ann}_R{N}}$ and $(k_0, \mathrm{\bf k}) \in \mathbb{N}^{d+1}$ such that $k_0+ \mid\mathrm{\bf k}\mid \;= \dim \dfrac{N}{0_N: I^\infty}-1.$
  Let ${\bf x}$ be  a  mixed  multiplicity system of $N$ with respect to ideals $J,\mathrm{\bf I}$ of the type $(k_0,\mathrm{\bf k})$ and ${\bf x}^*$  the image of ${\bf x}$ in $\bigcup_{i = 0}^dT_i.$ Then we have
$$e(J^{[k_0+1]},\mathrm{\bf I}^{[\mathrm{\bf k}]}; N ) = \chi({\bf x}^*, {\cal N})=\widetilde{e}({\bf x}^*, {\cal N}).$$
\end{theorem}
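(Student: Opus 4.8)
The plan is to obtain Theorem~\ref{th4.8} as an essentially immediate specialization of Corollary~\ref{co4.7}(iii), the point being that under the stated boundary condition on $(k_0,\mathbf k)$ the extended invariant $E(J^{[k_0+1]},\mathbf I^{[\mathbf k]};N)$ coincides with the genuine mixed multiplicity $e(J^{[k_0+1]},\mathbf I^{[\mathbf k]};N)$. All the substantive work has already been done: the transfer of the ``mixed multiplicity system'' condition and of dimensions across the assignment $N\mapsto\mathcal N$ (Remark~\ref{re4.4}, based on the graded isomorphism relating $\mathcal N/\mathbf x^*\mathcal N$ to the analogous object built from $N/\mathbf xN$), and the identification of $\chi$, $\widetilde e$ and $E$ for multigraded modules (Theorem~\ref{th3.20}, Proposition~\ref{th3.5}), which Corollary~\ref{co4.7} packages for $\mathcal N$ over $T$.

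Concretely, the steps I would carry out are as follows. First, since $I=JI_1\cdots I_d\nsubseteq\sqrt{\mathrm{Ann}_R N}$, Remark~\ref{de4.0} gives $\dim\mathrm{Supp}_{++}\mathcal N=\dim\tfrac{N}{0_N:I^\infty}-1$, and in particular this value is $\ge 0$; hence the hypothesis $k_0+|\mathbf k|=\dim\tfrac{N}{0_N:I^\infty}-1$ says exactly $k_0+|\mathbf k|=\dim\mathrm{Supp}_{++}\mathcal N$, and more to the point it says $k_0+|\mathbf k|\ge\dim\tfrac{N}{0_N:I^\infty}-1$ with equality. Second, I would apply Corollary~\ref{co4.7}: part~(i) records that $\mathbf x^*$ is a mixed multiplicity system of $\mathcal N$ of type $(k_0,\mathbf k)$ (this is where being a Rees's superficial sequence is used, so that $\mathbf x^*$ lands in $\bigcup_{i=0}^d T_i$, i.e.\ in the degree-$\mathbf e_i$ pieces), and part~(iii), whose hypothesis $k_0+|\mathbf k|\ge\dim\tfrac{N}{0_N:I^\infty}-1$ now holds, yields $E(J^{[k_0+1]},\mathbf I^{[\mathbf k]};N)=\chi(\mathbf x^*,\mathcal N)=\widetilde e(\mathbf x^*,\mathcal N)$. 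Third, because we are on the boundary $k_0+|\mathbf k|=\dim\tfrac{N}{0_N:I^\infty}-1$, Definition~\ref{de4.5} gives $E(J^{[k_0+1]},\mathbf I^{[\mathbf k]};N)=e(J^{[k_0+1]},\mathbf I^{[\mathbf k]};N)$. Combining the last two displays produces the asserted equalities $e(J^{[k_0+1]},\mathbf I^{[\mathbf k]};N)=\chi(\mathbf x^*,\mathcal N)=\widetilde e(\mathbf x^*,\mathcal N)$.

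I do not expect a genuine obstacle here; the only care needed is bookkeeping. One must keep straight that $T$ is a finitely generated standard $\mathbb N^{(d+1)}$-graded algebra over the artinian ring $R/J$ and $\mathcal N$ a finitely generated graded $T$-module, so that the hypotheses of Theorem~\ref{th3.20} and Proposition~\ref{th3.5} are met (these are invoked inside Corollary~\ref{co4.7}); that the type of $\mathbf x^*$ as a sequence in $\bigcup_{i=0}^d T_i$ is indeed $(k_0,\mathbf k)$, matching the type of $\mathbf x$ with respect to $J,\mathbf I$; and that the translation $\dim\mathrm{Supp}_{++}(\mathcal N/\mathbf x^*\mathcal N)=\dim\tfrac{N}{\mathbf xN:I^\infty}-1$ from Remark~\ref{re4.2} is what makes ``$\mathbf x$ a mixed multiplicity system of $N$'' equivalent to ``$\mathbf x^*$ a mixed multiplicity system of $\mathcal N$''. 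If anything is subtle it is this correspondence, but it is already established; given it, the proof of Theorem~\ref{th4.8} is a one-line deduction from Corollary~\ref{co4.7}(iii).
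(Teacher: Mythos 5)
Your proposal matches the paper exactly: the paper derives Theorem~\ref{th4.8} as an immediate consequence of Corollary~\ref{co4.7}(iii), using that the boundary condition $k_0+\mid\mathbf{k}\mid\;=\dim\frac{N}{0_N:I^\infty}-1$ forces $E(J^{[k_0+1]},\mathbf{I}^{[\mathbf{k}]};N)=e(J^{[k_0+1]},\mathbf{I}^{[\mathbf{k}]};N)$. Your bookkeeping of the transfer via Remarks~\ref{de4.0}, \ref{re4.2} and \ref{re4.4} is correct and simply spells out what the paper leaves implicit.
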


Now by Corollary \ref{co3.17} and Remark \ref{re4.6}, we obtain the following corollary.
 \vskip 0.2cm
\begin{corollary}\label{co4.11} Assume  that $I=JI_1\cdots I_d$  is not contained in
$ \sqrt{\mathrm{Ann}_R{N}}$ and  $e(J^{[k_0+1]},\mathrm{\bf I}^{[\mathrm{\bf k}]}, N)\ne 0.$
 Let ${\bf x} = x_1, \ldots, x_s$ be a  mixed  multiplicity system of $N$ with
 respect to ideals $J,\mathrm{\bf I}$ of the type $(k_0,\mathrm{\bf
 k})$ such that for each $1\leq i \leq s,$ $$\dim \dfrac{N}{(x_1, \ldots, x_i)N: I^\infty} \le
 \dim \dfrac{N}{0_N: I^\infty} \;- i.$$
 Denote by $(m_i,\mathrm{\bf h}_i)= (m_i, {h_{i}}_1,\ldots,{h_{i}}_d)$ the type of the subsequence $ x_1,\ldots,x_i$ of ${\bf x}$ for each $1\leq i \leq s.$ And for each $1\leq i \leq s,$ set $$N_i = \frac{(x_1,\ldots,x_{i-1}){N}: x_i}{(x_1,\ldots,x_{i-1}){N}}.$$
Then the following statements hold.

\begin{itemize}
\item[$\mathrm{(i)}$]  $\dim \dfrac{N}{(x_1, \ldots, x_i)N: I^\infty}= \dim \dfrac{N}{0_N: I^\infty} \;- i$ for each $1\leq i \leq s.$
\item[$\mathrm{(ii)}$]
$e(J^{[k_0+1]},\mathrm{\bf I}^{[\mathrm{\bf k}]}; N) = e\big(J; \dfrac{N}{{\bf x}N:I^\infty}\big)
- \sum_{i=1}^sE\big(J^{[k_0-m_i+1]},\mathrm{\bf I}^{[\mathrm{\bf k}-\mathrm{\bf h}_i]}; N_i\big).$

\end{itemize}
\end{corollary}
\begin{proof}
 Since ${\bf x}$ is a  mixed  multiplicity system of $N$ with respect to ideals
  $J,\mathrm{\bf I}$ of the type $(k_0,\mathrm{\bf k})$ such that  $\dim \dfrac{N}{(x_1, \ldots, x_i)N: I^\infty} \le
\dim \dfrac{N}{0_N: I^\infty} \;- i$ for each
  $1\leq i \leq s,$
   ${\bf x}^*$
  is a  mixed  multiplicity system  of the type  $(k_0, \mathrm{\bf k})$  of $\cal N$
such that for each $1\leq i \leq s,$
$$\dim\mathrm{Supp}_{++}\big[\frac{\cal N}{(
x^*_1,\ldots,x^*_{i})\cal N}\big] \le \dim\mathrm{Supp}_{++}{\cal
N}- i$$
   by Remark \ref {de4.0} and
  Remark  \ref{re4.4}. Recall that  $e({\cal N}; k_0,\mathrm{\bf k}) = e(J^{[k_0+1]},
  \mathrm{\bf I}^{[\mathrm{\bf k}]}; N).$ Hence $e({\cal N}; k_0,\mathrm{\bf k}) \ne 0$ since  $e(J^{[k_0+1]},\mathrm{\bf I}^{[\mathrm{\bf k}]}; N)\ne 0.$
Consequently  by Corollary \ref{co3.17}(i), we have
$$\dim\mathrm{Supp}_{++}\big[\frac{\cal N}{( x^*_1,\ldots,x^*_{i})\cal N}\big]
 = \dim\mathrm{Supp}_{++}{\cal N}- i$$
for each $1\leq i \leq s.$ Therefore  $\dim
\dfrac{N}{(x_1,\ldots,x_{i})N: I^\infty}= \dim \dfrac{N}{0_N:
I^\infty} \;- i$ for each $1\leq i \leq s$  by Remark \ref{de4.0}.
We get (i). Now,  it is easily seen by  Remark \ref{re4.6} and
(4.1) that $e({\cal N}/{\bf x}^*{\cal N}; 0, {\bf 0}) = e(J
;\frac{N}{{\bf x}N: I^{\infty}})$  and for each $ 1 \le i \le s,$
\begin{align*}
&E\bigg(\frac{(x^*_1,\ldots,x^*_{i-1}){\cal N}: x^*_i}{(x^*_1,\ldots,x^*_{i-1}){\cal N}}; k_0-m_i,\mathrm{\bf k}-\mathrm{\bf h}_i\bigg)\\
&= E\bigg(J^{[k_0-m_i+1]},\mathrm{\bf I}^{[\mathrm{\bf k}-\mathrm{\bf h}_i]}; \frac{(x_1,\ldots,x_{i-1}){N}: x_i}{(x_1,\ldots,x_{i-1}){N}}\bigg).
\end{align*}
Consequently,  by Corollary \ref{co3.17}(ii)  we get (ii).
\end{proof}

In particular, let  ${\bf I}= I_1,\ldots,I_d$ be $\frak n$-primary
ideals. Then  $e(J^{[k_0+1]},\mathrm{\bf I}^{[\mathrm{\bf k}]},
N)\ne 0$ by \cite{Te}.  Moreover,  $\dim \dfrac{N}{{\bf x}N}=\dim
\dfrac{N}{{\bf x}N:I^\infty}$ and $e\big(J; \dfrac{N}{{\bf
x}N:I^\infty}\big) = e\big(J; \dfrac{N}{{\bf x}N}\big)$ by
\cite{Vi}. Hence since $\dim \dfrac{N}{{\bf x}N:I^\infty} \le 1,$
it follows that
 $ \dim \dfrac{N}{{\bf x}N}
\le \dim N - s .$ So ${\bf x}$ is a part of a parameter system for
$N.$

Then as an immediate consequence of  Corollary \ref{co4.11}, we
have the following result.
 \vskip 0.2cm
\begin{corollary}\label{co4.16} Let $J; {\bf I}$ be  $\frak n$-primary ideals and $\dim N > 0.$
 Let ${\bf x} = x_1, \ldots, x_s$ be a  mixed  multiplicity system of $N$ with respect to
 ideals $J,\mathrm{\bf I}$ of the type $(k_0,\mathrm{\bf k}).$
 Denote by $(m_i,\mathrm{\bf h}_i)= (m_i, {h_{i}}_1,\ldots,{h_{i}}_d)$
 the type of the subsequence $ x_1,\ldots,x_i$ of ${\bf x}$ for each $1\leq i \leq s.$ And put
 $N_i = \frac{(x_1,\ldots,x_{i-1}){N}: x_i}{(x_1,\ldots,x_{i-1}){N}}$ for each $1\leq i \leq s.$
Then we have the following statements.
\begin{itemize}
\item[$\mathrm{(i)}$] ${\bf x}$ is a part of a parameter system  for $N.$
\item[$\mathrm{(ii)}$]
$e(J^{[k_0+1]},\mathrm{\bf I}^{[\mathrm{\bf k}]}; N) = e\big(J; \dfrac{N}{{\bf x}N}\big)
- \sum_{i=1}^sE\big(J^{[k_0-m_i+1]},\mathrm{\bf I}^{[\mathrm{\bf k}-\mathrm{\bf h}_i]}; N_i\big).$
\end{itemize}
\end{corollary}

Using different sequences, one expressed
  mixed multiplicities of ideals in terms of the Hilbert-Samuel multiplicity. For instance: in the case of $\mathfrak{n}$-primary ideals, Risler-Teissier \cite{Te} in 1973 showed that each mixed multiplicity is the multiplicity of an ideal generated by a superficial sequence and  Rees \cite{Re} in 1984 proved that mixed multiplicities are multiplicities of ideals generated by joint reductions; for
the case of arbitrary ideals, Viet \cite{Vi} in 2000 characterized mixed multiplicities as the  Hilbert-Samuel multiplicity via (FC)-sequences.
\vskip 0.4cm
 \begin{definition}[\cite{Vi}]\label{de4.9}  Let $\mathrm{\bf I}= I_1,\ldots,I_d$ be ideals of $R.$ Set ${\frak I} = I_1\cdots I_d.$ An element $a \in R$ is called a {\it weak-$(FC)$- element of $N$ with respect to $\mathrm{\bf I}$}  if there exists $i \in \{ 1, \ldots, d\}$ such that $a \in I_i$ and
the following conditions are satisfied:
 \begin{itemize}
      \item[$\mathrm{(i)}$] $a$ is an $\frak I$-filter-regular element with respect to $N,$ i.e.,\;$0_N:a \subseteq 0_N: {\frak I}^{\infty}.$
    \item[$\mathrm{(ii)}$] $a$ is a Rees superficial element of $N$ with respect to $\mathrm{\bf I}.$
\end{itemize}
\noindent
\end{definition}
Note that \cite{Vi} defined weak-(FC)-sequences in the condition
$\frak I \nsubseteq \sqrt{\mathrm{Ann}_R{N}}$ (see e.g. \cite
{DMT, DV, MV,Vi1,Vi2, Vi4,VDT, VT, VT1}). In  Definition
\ref{de4.9}, we omitted  this condition.

We end this paper with the following result of mixed multiplicities of ideals.
   \vskip 0.2cm
\begin{corollary}\label{co4.10}  Let $k_0+ \mid {\bf k}\mid = \dim \dfrac{N}{0_N: I^\infty}-1$.  Let ${\bf x}$ be  a  weak-$(FC)$-sequence of $N$ with respect to $J, \mathrm{\bf I}$ of the type $(k_0,\mathrm{\bf k})$ and let ${\bf x}^*$ be the image of ${\bf x}$ in $\bigcup_{i = 0}^dT_i.$ Then
  \begin{itemize}
\item[$\mathrm{(i)}$]
    $\widetilde{e}({\bf x}^*, {\cal N}) = \chi({\bf x}^*, {\cal N})=e(J^{[k_0+1]},\mathrm{\bf I}^{[\mathrm{\bf k}]}; N )= E(J^{[1]},{\bf I}^{[{\bf 0}]}; \frac{N}{{\bf x}N}).$
\item[$\mathrm{(ii)}$] $e(J^{[k_0+1]},\mathrm{\bf I}^{[\mathrm{\bf
k}]}; N )\ne 0 $\; if and only if  $\dim\frac{N}{{\bf x}N:
I^{\infty}}= 1.$ In this case,  $$e(J^{[k_0+1]},\mathrm{\bf
I}^{[\mathrm{\bf k}]}; N )= e(J;\frac{N}{{\bf x}N:I^{\infty}}).$$
\end{itemize}
\end{corollary}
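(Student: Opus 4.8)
The idea is that a weak-$(FC)$-sequence of the prescribed type has a double nature: on the $R$-side it is a mixed multiplicity system of $N$ with respect to $J,\mathbf{I}$ of type $(k_0,\mathbf{k})$, and on the $T$-side its image $\mathbf{x}^*$ in $\bigcup_{i=0}^d T_i$ is a $T_{++}$-filter-regular sequence with respect to $\mathcal N$. Once both facts are in place, Theorem \ref{th4.8} yields the equality of $e$, $\chi$ and $\widetilde{e}$, while Corollary \ref{co3.12} (applied to the graded module $\mathcal N$ over $T$) supplies the extra term $E(\mathcal N/\mathbf{x}^*\mathcal N;\mathbf 0)$ and the nonvanishing criterion; it then only remains to transport these statements from $\mathcal N/\mathbf{x}^*\mathcal N$ back to $N/\mathbf{x}N$ via the graded isomorphism (4.1).

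First I would settle the numerology. As $(k_0,\mathbf{k})\in\mathbb N^{d+1}$, the hypothesis $k_0+|\mathbf{k}|=\dim\bigl(N/(0_N:I^\infty)\bigr)-1$ forces $\dim\bigl(N/(0_N:I^\infty)\bigr)\ge 1$, so $I\nsubseteq\sqrt{\mathrm{Ann}_R N}$, and Remark \ref{de4.0} gives $\dim\mathrm{Supp}_{++}\mathcal N=\dim\bigl(N/(0_N:I^\infty)\bigr)-1=k_0+|\mathbf{k}|$. Iterating the dimension drop of Remark \ref{re4.13}(ii) along $\mathbf{x}$ (a weak-$(FC)$-sequence of length $k_0+|\mathbf{k}|$) gives $\dim\bigl(N/(\mathbf{x}N:I^\infty)\bigr)\le\dim\bigl(N/(0_N:I^\infty)\bigr)-(k_0+|\mathbf{k}|)=1$. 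Since $\mathbf{x}$ is in particular a Rees's superficial sequence of $N$ with respect to $J,\mathbf{I}$ of type $(k_0,\mathbf{k})$, Definition \ref{de4.2} shows $\mathbf{x}$ is a mixed multiplicity system of $N$ with respect to $J,\mathbf{I}$ of type $(k_0,\mathbf{k})$; then Remark \ref{re4.4} makes $\mathbf{x}^*$ a mixed multiplicity system of $\mathcal N$ of type $(k_0,\mathbf{k})$, and Remark \ref{re4.13}(i) makes it a $T_{++}$-filter-regular sequence with respect to $\mathcal N$.

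Next I would apply Theorem \ref{th4.8}, obtaining $e(J^{[k_0+1]},\mathbf{I}^{[\mathbf{k}]};N)=\chi(\mathbf{x}^*,\mathcal N)=\widetilde{e}(\mathbf{x}^*,\mathcal N)$, and then Corollary \ref{co3.12} with $S=T$, $A=R/J$, $M=\mathcal N$ and the $T_{++}$-filter-regular sequence $\mathbf{x}^*$ of type $(k_0,\mathbf{k})$, whose modulus equals $\dim\mathrm{Supp}_{++}\mathcal N$: this gives in addition $\chi(\mathbf{x}^*,\mathcal N)=\widetilde{e}(\mathbf{x}^*,\mathcal N)=E\bigl(\mathcal N/\mathbf{x}^*\mathcal N;\mathbf 0\bigr)=\ell_{R/J}\bigl(\mathcal N/\mathbf{x}^*\mathcal N\bigr)_{(m,\mathbf m)}$ for $(m,\mathbf m)\gg 0$, together with the equivalence $E(\mathcal N;k_0,\mathbf{k})\ne 0\iff\dim\mathrm{Supp}_{++}\bigl(\mathcal N/\mathbf{x}^*\mathcal N\bigr)=0$. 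To finish (i) I would identify $E\bigl(\mathcal N/\mathbf{x}^*\mathcal N;\mathbf 0\bigr)$ with $E\bigl(J^{[1]},\mathbf{I}^{[\mathbf 0]};N/\mathbf{x}N\bigr)$: by (4.1), $\mathcal N/\mathbf{x}^*\mathcal N$ coincides in all large multidegrees with the analogue of $\mathcal N$ built from $N/\mathbf{x}N$, so the two have the same Hilbert polynomial and the same invariant $E(\,\cdot\,;\mathbf 0)$ (this is the iterated version of Remark \ref{re4.6}). Since $E(J^{[k_0+1]},\mathbf{I}^{[\mathbf{k}]};N)=e(J^{[k_0+1]},\mathbf{I}^{[\mathbf{k}]};N)$ at the boundary value (Definition \ref{de4.5}), assembling these equalities gives (i).

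For (ii) I would combine (i), the nonvanishing equivalence above, and the identity $\dim\mathrm{Supp}_{++}\bigl(\mathcal N/\mathbf{x}^*\mathcal N\bigr)=\dim\bigl(N/(\mathbf{x}N:I^\infty)\bigr)-1$ of Remark \ref{re4.2}, getting $e(J^{[k_0+1]},\mathbf{I}^{[\mathbf{k}]};N)\ne 0\iff\dim\bigl(N/(\mathbf{x}N:I^\infty)\bigr)=1$. When this holds, $(N/\mathbf{x}N)/(0:I^\infty)=N/(\mathbf{x}N:I^\infty)$ is one-dimensional, hence nonzero, so $I\nsubseteq\sqrt{\mathrm{Ann}_R(N/\mathbf{x}N)}$ and Remark \ref{re4.13}(iii) applied to $N/\mathbf{x}N$ gives $e\bigl(J^{[1]},\mathbf{I}^{[\mathbf 0]};N/\mathbf{x}N\bigr)=e\bigl(J;N/(\mathbf{x}N:I^\infty)\bigr)$, while $E\bigl(J^{[1]},\mathbf{I}^{[\mathbf 0]};N/\mathbf{x}N\bigr)=e\bigl(J^{[1]},\mathbf{I}^{[\mathbf 0]};N/\mathbf{x}N\bigr)$ since $0=\dim\bigl(N/(\mathbf{x}N:I^\infty)\bigr)-1$; feeding these into the last equality of (i) yields $e(J^{[k_0+1]},\mathbf{I}^{[\mathbf{k}]};N)=e\bigl(J;N/(\mathbf{x}N:I^\infty)\bigr)$. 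The substantive content is already carried by Theorem \ref{th4.8} and Corollary \ref{co3.12}, so the only point that will need care is the bookkeeping: arranging the numerical hypotheses so that $\mathbf{x}^*$ is at once $T_{++}$-filter-regular and a mixed multiplicity system of $\mathcal N$ of type whose modulus is exactly $\dim\mathrm{Supp}_{++}\mathcal N$, and passing between the invariants of $\mathcal N/\mathbf{x}^*\mathcal N$ and of $N/\mathbf{x}N$ through (4.1) without confusing the $E$- and $e$-versions at the threshold $\dim=0$.
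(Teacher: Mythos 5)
Your proposal is correct and follows essentially the same route as the paper: Remark \ref{re4.13}(i) to make ${\bf x}^*$ a $T_{++}$-filter-regular sequence, Corollary \ref{co3.12} applied to $\cal N$ over $T$, Remark \ref{re4.6} (via (4.1)) to transport $E({\cal N}/{\bf x}^*{\cal N};0,{\bf 0})$ to $E(J^{[1]},{\bf I}^{[{\bf 0}]};N/{\bf x}N)$, Theorem \ref{th4.8} for the chain of equalities, and Remark \ref{re4.13}(iii) for the identification with $e(J;N/({\bf x}N:I^\infty))$ in part (ii). Your extra step of explicitly checking via Remark \ref{re4.13}(ii) that ${\bf x}$ is a mixed multiplicity system of $N$ (so that Theorem \ref{th4.8} applies) is a small piece of bookkeeping the paper leaves implicit, but it does not change the argument.
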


\begin{proof} Since ${\bf x}$ is a  weak-$(FC)$-sequence of $N$ with respect to $J, \mathrm{\bf I}$ of the type $(k_0,\mathrm{\bf k}),$
${\bf x}^*$ is a  $T_{++}$-filter-regular sequence  with respect
to  $\cal N$ of the type $(k_0,\mathrm{\bf k})$ by
\cite[Proposition 4.5]{VT1}. Since $k_0+ \mid {\bf k}\mid = \dim
\dfrac{N}{0_N: I^\infty}-1,$ $ \dim \mathrm{Supp}_{++}{\cal N} =
k_0+ \mid {\bf k}\mid$ by Remark \ref{de4.0}. So $E({\cal N};k_0,
\mathrm{\bf k}) = e({\cal N};k_0, \mathrm{\bf k}).$ Hence by
Corollary \ref{co3.12}, we have $$e({\cal N};k_0, \mathrm{\bf k})
= E({\cal N}/{\bf x}^*{\cal N};0, \mathrm{\bf 0}).$$  Remember
that
 $E({\cal N}/{\bf x}^*{\cal N}; 0, {\bf 0}) = E(J^{[1]}, {\bf I}^{[{\bf 0}]}; N/{\bf x}N)$ by Remark \ref{re4.6} and
$$e({\cal N};k_0, \mathrm{\bf k}) = e(J^{[k_0+1]},\mathrm{\bf I}^{[\mathrm{\bf k}]}; N ).$$ Consequently, by Theorem \ref{th4.8} we get (i) that
$$\widetilde{e}({\bf x}^*, {\cal N}) = \chi({\bf x}^*, {\cal N})=e(J^{[k_0+1]},\mathrm{\bf I}^{[\mathrm{\bf k}]}; N )= E(J^{[1]},{\bf I}^{[{\bf 0}]}; \frac{N}{{\bf x}N}).$$
So $e(J^{[k_0+1]},\mathrm{\bf I}^{[\mathrm{\bf k}]}; N )\ne 0 $\;
if and only if $E(J^{[1]}, {\bf I}^{[{\bf 0}]}; \frac{N}{{\bf
x}N}) \ne 0.$ This is equivalent to $\dim\frac{N}{{\bf x}N:
I^{\infty}}= 1$ by Corollary \ref{co3.12}. In this case,
 $E(J^{[1]}, {\bf I}^{[{\bf 0}]}; \frac{N}{{\bf x}N}) = e(J;\frac{N}{{\bf x}N: I^{\infty}})
 $ by Remark \ref{re4.6}.
Hence
 $e(J^{[k_0+1]},\mathrm{\bf I}^{[\mathrm{\bf k}]}; N )= e(J;\frac{N}{{\bf
 x}N:I^{\infty}})$ by (i).
 \end{proof}

Note  that Corollary \ref{co4.10}(ii) is also an immediate consequence of \cite[Theorem 3.4]{Vi}(see e.g. \cite{
DV, MV,Vi2,VDT,  VT}).

\end{document}